\documentclass[12pt]{amsart}
\usepackage[utf8]{inputenc}
\usepackage[english]{babel}
\usepackage{amsmath,amssymb,a4wide,latexsym,amscd,amsthm}
\usepackage{graphics}

\theoremstyle{plain}
\newtheorem{theorem}{Theorem}[section]
\newtheorem{lemma}{Lemma}[section]
\newtheorem{proposition}{Proposition}[section]
\newtheorem{corollary}{Corollary}[section]

\theoremstyle{definition}

\newtheorem{remark}{Remark}[section]

\numberwithin{equation}{section}

\begin{document}

\title[]
{On the optimal error bound for the first step in the method
of cyclic alternating projections}

\author[I. Feshchenko]{Ivan Feshchenko}
\address{Taras Shevchenko National University of Kyiv,
Faculty of Mechanics and Mathematics, Kyiv, Ukraine and
Samsung R\&D Institute Ukraine, 57 L'va Tolstogo str., Kiev 01032, Ukraine}
\email{ivanmath007@gmail.com and i.feshchenko@samsung.com}

\begin{abstract}
Let $H$ be a Hilbert space and $H_1,...,H_n$ be closed subspaces of $H$.
Set $H_0:=H_1\cap H_2\cap...\cap H_n$ and let 
$P_k$ be the orthogonal projection onto $H_k$, $k=0,1,...,n$.
The paper is devoted to the study of functions $f_n:[0,1]\to\mathbb{R}$
defined by
$$
f_n(c)=\sup\{\|P_n...P_2 P_1-P_0\|\,|c_F(H_1,...,H_n)\leqslant c\},\,c\in[0,1],
$$
where the supremum is taken over all systems of subspaces $H_1,...,H_n$
for which the Friedrichs number $c_F(H_1,...,H_n)$ is less than or equal to $c$.
Using the functions $f_n$ one can easily get an upper bound for the rate of convergence
in the method of cyclic alternating projections.
We will show that the problem of finding $f_n(c)$ is equivalent to 
a certain optimization problem on a subset of the set of
Hermitian complex $n\times n$ matrices.
Using the equivalence we find $f_3$ and study properties of $f_n$, $n\geqslant 4$.
Moreover, we show that
$$
1-a_n(1-c)-\widetilde{b}_n(1-c)^2\leqslant 
f_n(c)\leqslant 1-a_n(1-c)+b_n(1-c)^2
$$
for all $c\in[0,1]$,
where $a_n=2(n-1)\sin^2(\pi/(2n))$, $b_n=6(n-1)^2\sin^4(\pi/(2n))$
and $\widetilde{b}_n$ is some positive number.
\end{abstract}

\subjclass[2010]{46C07, 47B15.}

\keywords{Hilbert space, system of subspaces, orthogonal projection,
Friedrichs number.}

\maketitle

\section{Introduction}

\subsection{The Friedrichs number of a pair of subspaces and
the method of alternating projections for two subspaces}
Let $H$ be a complex Hilbert space and $H_1,H_2$ be two closed subspaces of $H$.
The number $c_F(H_1,H_2)$ defined by
\begin{equation*}
c_F(H_1,H_2):=\sup\{|\langle x,y\rangle|\,| 
x\in H_1\ominus (H_1\cap H_2), \|x\|\leqslant 1,
y\in H_2\ominus (H_1\cap H_2), \|y\|\leqslant 1\}
\end{equation*}
is called the Friedrichs number (more precisely, the cosine of the Friedrichs angle)
of subspaces $H_1,H_2$.
Why is $c_F$ important?
A few properties of a pair $H_1,H_2$ can be formulated in terms of the Friedrichs number,
for example
\begin{enumerate}
\item
the orthogonal projections onto $H_1$ and $H_2$ commute if and only if $c_F(H_1,H_2)=0$;
\item
the sum $H_1+H_2$ is closed if and only if $c_F(H_1,H_2)<1$,
\end{enumerate}
see, e.g., \cite{Deu95}.
Also, the Friedrichs number is closely related to the rate of convergence in the
method of alternating projections.
This is a well-known method of finding the orthogonal projection of a given element $x\in H$ 
onto the intersection $H_1\cap H_2$ when the orthogonal projections $P_1$ and $P_2$
onto $H_1$ and $H_2$ are assumed to be known.
Define the sequence $x_0:=x$, $x_1:=P_1 x_0$, $x_2:=P_2 x_1$,
$x_3:=P_1 x_2$, $x_4:=P_2 x_3$ and so on.
Back in 1933 von Neumann \cite{Neu50} 
proved that $x_k\to P_0 x$ as $k\to\infty$,
where $P_0$ is the orthogonal projection onto $H_1\cap H_2$.
What can be said about the rate of convergence?
Since $x_{2k}=(P_2 P_1)^k x$, we see that
\begin{equation*}
x_{2k}-P_0 x=((P_2 P_1)^k-P_0)x.
\end{equation*}
With respect to the orthogonal decomposition 
$H=(H_1\cap H_2)\oplus (H\ominus (H_1\cap H_2))$ we have
$P_1=I\oplus P_1'$, $P_2=I\oplus P_2'$ and $P_0=I\oplus 0$,
where $I$ is the identity operator and $P_1',P_2'$ are orthogonal projections.
Hence 
\begin{equation*}
(P_2 P_1)^k-P_0=0\oplus(P_2' P_1')^k=(P_2 P_1-P_0)^k
\end{equation*} 
and
\begin{equation*}
\|x_{2k}-P_0 x\|=\|((P_2 P_1)^k-P_0)x\|=
\|(P_2 P_1-P_0)^k x\|\leqslant\|P_2 P_1-P_0\|^k \|x\|.
\end{equation*}
But $\|P_2 P_1-P_0\|=c_F(H_1,H_2)$ (see, e.g., \cite{Deu95}) and therefore we get estimate
\begin{equation*}
\|x_{2k}-P_0 x\|\leqslant (c_F(H_1,H_2))^k\|x\|.
\end{equation*}
This estimate is not sharp.
Aronszajn \cite{Aro50} proved that 
$$\|(P_2 P_1)^k-P_0\|\leqslant (c_F(H_1,H_2))^{2k-1}.$$
Therefore we get
\begin{equation*}
\|x_{2k}-P_0 x\|\leqslant (c_F(H_1,H_2))^{2k-1}\|x\|.
\end{equation*}
It is worth mentioning that this estimate is sharp because Kayalar and Weinert \cite{KW88} proved that 
$$\|(P_2 P_1)^k-P_0\|=(c_F(H_1,H_2))^{2k-1},\,k\geqslant 1.$$

\subsection{The method of cyclic alternating projections for $n$ subspaces}
Let $H$ be a complex Hilbert space and $H_1,...,H_n$ be closed subspaces of $H$.
The method of cyclic alternating projections is a well-known method
of finding the orthogonal projection of a given element $x\in H$ 
onto the intersection $H_1\cap H_2\cap...\cap H_n$ 
when the orthogonal projections $P_i$ onto $H_i$, $i=1,2,...,n$ are assumed to be known.
The method plays an important role in many areas of mathematics, see, e.g., \cite{Deu92}. 

Define the sequence 
\begin{equation*}
x_0:=x, x_1:=P_1 x_0, x_2:=P_2 x_1,...,x_n:=P_n x_{n-1}
\end{equation*}
and after this
\begin{equation*}
x_{n+1}:=P_1 x_n, x_{n+2}:=P_2 x_{n+1},...,x_{2n}:=P_n x_{2n-1},
\end{equation*}
and so on.
Back in 1962 Halperin \cite{Hal62} proved that $x_k\to P_0 x$ as $k\to\infty$,
where $P_0$ is the orthogonal projection onto the intersection $H_1\cap H_2\cap...\cap H_n$.
A simple and elegant proof of the result can be found in \cite{NS06}.
In particular, the subsequence $x_{nk}=(P_n...P_2 P_1)^k x\to P_0 x$ as $k\to\infty$.
What can be said about the rate of convergence of $\{x_{nk}|k\geqslant 1\}$ to $P_0 x$?
To answer this question Badea, Grivaux and M\"{u}ller 
in \cite{BGM10}, \cite{BGM11} introduced the Friedrichs number of $n$ subspaces,
$c_F(H_1,...,H_n)$.

\subsection{The Friedrichs number of $n$ subspaces}

Badea, Grivaux and M\"{u}ller noticed that for two subspaces $H_1,H_2$
\begin{align*}
c_F(H_1,H_2)&=\sup\{\dfrac{2 Re\langle x_1,x_2\rangle}{\|x_1\|^2+\|x_2\|^2}\,|\\
&x_1\in H_1\ominus (H_1\cap H_2), x_2\in H_2\ominus (H_1\cap H_2), (x_1,x_2)\neq (0,0)\}=\\
&=\sup\{\dfrac{\langle x_1,x_2\rangle+\langle x_2,x_1\rangle}{\|x_1\|^2+\|x_2\|^2}\,|\\
&x_1\in H_1\ominus (H_1\cap H_2), x_2\in H_2\ominus (H_1\cap H_2), (x_1,x_2)\neq (0,0)\} 
\end{align*}
and defined
\begin{align*}
c_F(H_1,...,H_n)&:=\sup\{\dfrac{2}{n-1}\dfrac{\sum_{i<j}Re\langle x_i,x_j\rangle}{\|x_1\|^2+\|x_2\|^2+...+\|x_n\|^2}\,|\\
&x_i\in H_i\ominus (H_1\cap H_2\cap...\cap H_n), i=1,2,...,n, (x_1,x_2,...,x_n)\neq (0,0,...,0)\}=\\
&=\sup\{\dfrac{1}{n-1}\dfrac{\sum_{i\neq j}\langle x_i,x_j\rangle}{\|x_1\|^2+\|x_2\|^2+...+\|x_n\|^2}\,|\\
&x_i\in H_i\ominus (H_1\cap H_2\cap...\cap H_n), i=1,2,...,n, (x_1,x_2,...,x_n)\neq (0,0,...,0)\}.
\end{align*}

Since this definition seems to be rather difficult, we will present a more simple formula for $c_F$.
But first we define the Dixmier number of $n$ subspaces, $c_D(H_1,...,H_n)$.
Following \cite{BGM11}, set
\begin{align*}
c_D(H_1,...,H_n)&:=\sup\{\dfrac{2}{n-1}\dfrac{\sum_{i<j}Re\langle x_i,x_j\rangle}{\|x_1\|^2+\|x_2\|^2+...+\|x_n\|^2}\,|\\
&x_i\in H_i, i=1,2,...,n, (x_1,x_2,...,x_n)\neq (0,0,...,0)\}=\\
&=\sup\{\dfrac{1}{n-1}\dfrac{\sum_{i\neq j}\langle x_i,x_j\rangle}{\|x_1\|^2+\|x_2\|^2+...+\|x_n\|^2}\,|\\
&x_i\in H_i, i=1,2,...,n, (x_1,x_2,...,x_n)\neq (0,0,...,0)\}.
\end{align*}
It is clear that
$$
c_F(H_1,...,H_n)=c_D(H_1\ominus H_0,...,H_n\ominus H_0),
$$
where $H_0=H_1\cap H_2\cap...\cap H_n$.

The Dixmier number of $n$ subspaces is closely related to the sum of the corresponding orthogonal projections.

\begin{proposition}\label{P:c_D and P_1+...+P_n}
The following equality holds:
$$
\|P_1+...+P_n\|=1+(n-1)c_D(H_1,...,H_n).
$$
\end{proposition}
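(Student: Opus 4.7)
The plan is to realize $S:=P_1+\dots+P_n$ as $AA^*$ for a well-chosen operator $A$ between Hilbert spaces, and then read off $\|A\|^2$ directly from the definition of $c_D$.

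First I would introduce the orthogonal direct sum $K:=H_1\oplus H_2\oplus\dots\oplus H_n$, viewed as a closed subspace of $H^n$ with inner product $\langle(x_i),(y_i)\rangle=\sum_{i=1}^n\langle x_i,y_i\rangle$, and the bounded linear operator $A\colon K\to H$ given by $A(x_1,\dots,x_n):=x_1+\dots+x_n$. A short computation using $\langle x_i,y\rangle=\langle x_i,P_iy\rangle$ for $x_i\in H_i$ shows that the adjoint acts by $A^*y=(P_1y,\dots,P_ny)$. Hence $AA^*y=\sum_{i=1}^n P_iy=Sy$ for every $y\in H$, so $\|S\|=\|AA^*\|=\|A\|^2$.

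Next I would compute $\|A\|^2$ directly from its definition as
\[
\|A\|^2=\sup\Bigl\{\tfrac{\|x_1+\dots+x_n\|^2}{\|x_1\|^2+\dots+\|x_n\|^2}\,\Big|\,x_i\in H_i,\ (x_1,\dots,x_n)\neq 0\Bigr\}.
\]
Expanding the numerator gives $\|x_1+\dots+x_n\|^2=\sum_i\|x_i\|^2+2\sum_{i<j}\operatorname{Re}\langle x_i,x_j\rangle$, so that
\[
\|A\|^2=1+\sup\Bigl\{\tfrac{2\sum_{i<j}\operatorname{Re}\langle x_i,x_j\rangle}{\|x_1\|^2+\dots+\|x_n\|^2}\,\Big|\,x_i\in H_i,\ (x_1,\dots,x_n)\neq 0\Bigr\}.
\]
By the very definition of the Dixmier number, the last supremum equals $(n-1)c_D(H_1,\dots,H_n)$, and combining everything yields $\|P_1+\dots+P_n\|=1+(n-1)c_D(H_1,\dots,H_n)$.

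There is no real obstacle: once one thinks of bundling the subspaces into a single Hilbert space and exploiting the identity $\|AA^*\|=\|A\|^2$, the statement reduces to a one-line expansion of $\|x_1+\dots+x_n\|^2$. The only conceptual step is noticing that the quadratic form defining $c_D$ on tuples $(x_1,\dots,x_n)\in H_1\times\dots\times H_n$ is exactly (up to the additive $\sum\|x_i\|^2$) the Rayleigh quotient of the summation map $A$; the rest is bookkeeping, including the harmless observation that choosing a single nonzero $x_1$ and all other $x_i=0$ makes the supremum automatically nonnegative, so no sign issues arise.
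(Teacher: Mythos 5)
Your proposal is correct and follows essentially the same route as the paper: the paper also introduces the summation operator $S\colon H_1\oplus\dots\oplus H_n\to H$, identifies $S^*x=(P_1x,\dots,P_nx)^t$, uses $\|SS^*\|=\|S\|^2$, and reads off $\|S\|^2=1+(n-1)c_D$ from the expansion of $\|x_1+\dots+x_n\|^2$. No substantive differences.
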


As a corollary, we see that
$$
c_D(H_1,...,H_n)=\dfrac{1}{n-1}\|P_1+...+P_n\|-\dfrac{1}{n-1}
$$
and consequently
$$
c_F(H_1,...,H_n)=c_D(H_1\ominus H_0,...,H_n\ominus H_0)=
\dfrac{1}{n-1}\|P_1+...+P_n-nP_0\|-\dfrac{1}{n-1}.
$$
This equality is not new, see \cite[Proposition 3.7]{BGM11}.

\subsection{The rate of convergence in the method of cyclic alternating projections}

Let us return to the question on the rate of convergence in the method of cyclic alternating projections.
In \cite{BGM11} Badea, Grivaux and M\"{u}ller showed that
\begin{enumerate}
\item
if $c_F(H_1,...,H_n)<1$, i.e., if the angle between $H_1,...,H_n$ is positive, then
\begin{equation*}
\|(P_n...P_2 P_1)^k-P_0\|\leqslant q^k,\,k\geqslant 1
\end{equation*}
for some $q=q(c_F(H_1,...,H_n))\in[0,1)$.
The inequality means that the sequence of operators $(P_n...P_2 P_1)^k$ converges ``quickly'' to $P_0$ as $k\to\infty$.
\item
if $c_F(H_1,...,H_n)=1$, i.e., if the angle between $H_1,...,H_n$ equals zero, then
\begin{equation*}
\|(P_n...P_2 P_1)^k-P_0\|=1,\,k\geqslant 1.
\end{equation*}
Moreover, the sequence of operators $(P_n...P_2 P_1)^k$ converges strongly to $P_0$ as $k\to\infty$
and we have ``arbitrarily slow'' convergence of $(P_n...P_2 P_1)^k$ to $P_0$
(see \cite{BGM11}).
\end{enumerate}
For more complete picture of the quick uniform convergence/arbitrarily slow convergence dichotomy 
see \cite{BGM11} and \cite{BS16}.

\subsection{What this paper is about.}
Let $H$ be a complex Hilbert space and $H_1,...,H_n$ be closed subspaces of $H$.
Denote by $P_i$ the orthogonal projection onto $H_i$, $i=1,...,n$.
Set $H_0:=H_1\cap H_2\cap...\cap H_n$.
Denote by $P_0$ the orthogonal projection onto $H_0$.
This paper is devoted to the study of functions $f_n:[0,1]\to\mathbb{R}$, $n\geqslant 2$,
defined by
\begin{equation*}
f_n(c):=\sup\{\|P_n...P_2 P_1-P_0\|\,|c_F(H_1,...,H_n)\leqslant c\},\, c\in[0,1].
\end{equation*}
The supremum is taken over all systems of subspaces $H_1,...,H_n$ with
$c_F(H_1,...,H_n)\leqslant c$, where $c\in[0,1]$ is a given number.

\begin{remark}
The reader may wonder why we do not write $c_F(H_1,...,H_n)=c$.
Answer: we believe that the assumption $c_F(H_1,...,H_n)\leqslant c$ is more convenient for applications.
Indeed, finding the exact value of $c_F(H_1,...,H_n)$ is usually much more difficult than obtaining the
inequality $c_F(H_1,...,H_n)\leqslant c$.
\end{remark}

\subsection{An equivalent problem}

Let us present a problem which is equivalent to the problem of finding $f_n(c)$.
The fact that these problems are equivalent will be used in the sequel.

\begin{proposition}\label{prop:f_n and c_D}
For every $c\in[0,1]$
\begin{equation*}
f_n(c)=\sup\{\|P_n...P_2 P_1\|\,| c_D(H_1,...,H_n)\leqslant c\},
\end{equation*}
where the supremum is taken over all systems of subspaces $H_1,...,H_n$ with
$c_D(H_1,...,H_n)\leqslant c$.
\end{proposition}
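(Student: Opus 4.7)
The plan is to reduce both sides of the claimed equality to the same object by factoring out the common intersection $H_0=H_1\cap\ldots\cap H_n$. With respect to the orthogonal decomposition $H=H_0\oplus H_0^\perp$, each $P_i$ splits as $I\oplus P_i'$, where $P_i'$ is the orthogonal projection onto $H_i':=H_i\ominus H_0$ viewed as a subspace of $H_0^\perp$; this uses only $H_0\subseteq H_i$. Consequently $P_n\ldots P_1=I\oplus(P_n'\ldots P_1')$ and $P_0=I\oplus 0$, which gives
\[
\|P_n\ldots P_1-P_0\|=\|P_n'\ldots P_1'\|.
\]
Combined with the identity $c_F(H_1,\ldots,H_n)=c_D(H_1',\ldots,H_n')$ recorded in the excerpt, this lets me pass between the two sides through the map $(H_i)\mapsto(H_i')$, whose image consists of systems with trivial intersection.

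For the inequality $f_n(c)\leqslant\sup\{\|P_n\ldots P_1\|:c_D\leqslant c\}$, I would take any $(H_1,\ldots,H_n)$ with $c_F\leqslant c$ and pass to the primed system: it satisfies $c_D=c_F\leqslant c$ and has intersection projection equal to $0$, so $\|P_n'\ldots P_1'\|=\|P_n'\ldots P_1'-0\|$ equals the original $\|P_n\ldots P_1-P_0\|$ by the display. Thus every value admissible on the left is realized by a system admissible on the right.

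For the reverse inequality, my key observation is that if $H_0\neq\{0\}$ then choosing a common nonzero $v\in H_0$ and setting $x_1=\ldots=x_n=v$ in the Dixmier quotient produces the value $1$, forcing $c_D(H_1,\ldots,H_n)=1$. Therefore, whenever $c<1$, any system admissible on the right must already have $H_0=\{0\}$, so that $c_F=c_D\leqslant c$, $P_0=0$, and $\|P_n\ldots P_1\|=\|P_n\ldots P_1-P_0\|\leqslant f_n(c)$, completing this case. The main obstacle is the boundary value $c=1$, where the forced-triviality argument breaks down: I would handle it separately by checking directly that $f_n(1)=1$. The upper bound $f_n(1)\leqslant 1$ is immediate from the display, while for the lower bound one can take two one-dimensional subspaces of $\mathbb{C}^2$ whose angle tends to $0$ (padding with $H_3=\ldots=H_n=H_1$ for $n\geqslant 3$) and observe that $\|P_n\ldots P_1-P_0\|\to 1$, which matches $\sup\{\|P_n\ldots P_1\|:c_D\leqslant 1\}=1$.
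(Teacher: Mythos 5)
Your proposal is correct and follows essentially the same route as the paper: the same orthogonal decomposition $H=H_0\oplus H_0^{\perp}$ with $c_F(H_1,\ldots,H_n)=c_D(H_1',\ldots,H_n')$ for one direction, the same observation that $H_0\neq\{0\}$ forces $c_D=1$ for the reverse direction when $c<1$, and the same separate two-dimensional example to settle the boundary case $c=1$ (your padding by copies of $H_1$ versus the paper's padding by copies of the second subspace is an immaterial difference).
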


Now from Propositions~\ref{prop:f_n and c_D} and~\ref{P:c_D and P_1+...+P_n} it follows that
$$
f_n(c)=\sup\{\|P_n...P_2 P_1\|\,|\,\|P_1+...+P_n\|\leqslant 1+(n-1)c\}.
$$

\subsection{An application of $f_n$}\label{ss:application}

Using the functions $f_n$ one can easily estimate the rate of convergence in the method of cyclic alternating projections.
Indeed, we have
\begin{equation*}
\|(P_n...P_2 P_1)^k x-P_0 x\|=\|((P_n...P_2 P_1)^k-P_0)x\|\leqslant\|(P_n...P_2 P_1)^k-P_0\|\|x\|.
\end{equation*}
With respect to the orthogonal decomposition $H=H_0\oplus(H\ominus H_0)$ we have
$P_i=I\oplus P_i'$, $i=1,2,...,n$ and $P_0=I\oplus 0$.
Hence 
\begin{equation*}
(P_n...P_2 P_1)^k-P_0=0\oplus (P_n'...P_2'P_1')^k=(P_n...P_2 P_1-P_0)^k
\end{equation*} 
and
\begin{equation*}
\|(P_n...P_2 P_1)^k x-P_0 x\|\leqslant\|P_n...P_2P_1-P_0\|^k\|x\|\leqslant (f_n(c))^k\|x\|,
\end{equation*}
where $c_F(H_1,...,H_n)\leqslant c$.

\subsection{Notation}
Throughout this paper $H$ is a complex Hilbert space.
The inner product in $H$ is denoted by $\langle\cdot,\cdot\rangle$ and $\|\cdot\|$
stands for the corresponding norm, $\|x\|=\sqrt{\langle x,x\rangle}$.
The identity operator on $H$ is denoted by $I$
(throughout the paper it is clear which Hilbert space is being considered).
All vectors are vector-columns; the letter "t" means transpose.

\section{Results and Questions}

Our \textbf{Main Problem} is the following: find $f_n(c), c\in[0,1]$ for $n\geqslant 2$.
It is trivial that $f_2(c)=c, c\in[0,1]$
(this follows from the equality $\|P_1 P_2-P_0\|=c_F(H_1,H_2)$).
But what about $f_n$, $n\geqslant 3$?
Or, at least, what about $f_3$?

\subsection{The functions $f_n$ and an optimization problem}

We will show that our Main Problem is equivalent to 
a certain optimization problem on a subset of the set of
Hermitian complex $n\times n$ matrices.
For two Hermitian $n\times n$ matrices $A,B$
we will write $A\leqslant B$ if
$\langle Ax,x\rangle\leqslant\langle Bx,x\rangle$ for every
$x\in\mathbb{C}^n$, where $\langle\cdot,\cdot\rangle$ is the standard
inner product in the space $\mathbb{C}^n$.
Equivalently, $A\leqslant B$ if the matrix $B-A$ is positive semidefinite.

\begin{theorem}\label{Th:sup over matrices}
The following equality holds:
\begin{equation*}
f_n(c)=\max\{|a_{12}a_{23}...a_{n-1,n}|\}
\end{equation*}
where the maximum is taken over all Hermitian 
complex matrices $A=(a_{ij}|i,j=1,...,n)$
such that $a_{ii}=1$, $i=1,...,n$ and $0\leqslant A\leqslant (1+(n-1)c)I$.
\end{theorem}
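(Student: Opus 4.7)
The plan is to prove both inequalities $f_n(c) \leq M$ and $f_n(c) \geq M$, where $M$ denotes the asserted maximum. By Propositions~\ref{prop:f_n and c_D} and~\ref{P:c_D and P_1+...+P_n}, I may replace the constraint $c_F \leq c$ with $\|P_1+\cdots+P_n\| \leq 1+(n-1)c$ and compute $\|P_n\cdots P_1\|$ in place of $\|P_n\cdots P_1 - P_0\|$.

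For $f_n(c) \leq M$, start from any admissible system $H_1,\ldots,H_n$. Since $P_1 x = x$ for $x \in H_1$,
$$\|P_n\cdots P_1\| = \sup\{\|P_n\cdots P_2 x_1\| : x_1 \in H_1,\ \|x_1\|=1\}.$$
For fixed unit $x_1 \in H_1$, form the orbit $y_1 := x_1$, $y_i := P_i y_{i-1}$, and (when nonzero) $x_i := y_i/\|y_i\| \in H_i$. The identity $\langle y_{i-1}, y_i\rangle = \langle y_{i-1}, P_i y_{i-1}\rangle = \|y_i\|^2$ yields $\langle x_{i-1}, x_i\rangle = \|y_i\|/\|y_{i-1}\|$, and telescoping gives $\|P_n\cdots P_2 x_1\| = \|y_n\| = \prod_{i=2}^n \langle x_{i-1}, x_i\rangle$. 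Hence $\|P_n\cdots P_1\| \leq \sup \prod_{i=1}^{n-1}|\langle x_i, x_{i+1}\rangle|$, the supremum taken over all unit tuples $x_i \in H_i$. To such a tuple associate the Gram matrix $A$ with $a_{ij} := \langle x_j, x_i\rangle$; it is Hermitian positive semidefinite with $a_{ii}=1$ and $\prod_{i=1}^{n-1}|\langle x_i, x_{i+1}\rangle| = |a_{12}a_{23}\cdots a_{n-1,n}|$. The matrix inequality $A \leq (1+(n-1)c)I$ is obtained via the auxiliary operator $V:\mathbb{C}^n \to H$, $V\alpha := \sum_i \alpha_i x_i$: one has $V^*V = A$, while $VV^* h = \sum_i \langle h, x_i\rangle x_i$ is a sum of rank-one projections onto $\mathbb{C} x_i \subseteq H_i$, each dominated by $P_i$. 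Hence $VV^* \leq \sum_i P_i$, so $\|A\| = \|VV^*\| \leq \|\sum_i P_i\| \leq 1+(n-1)c$; combined with $A \geq 0$ this is the required bound, and $f_n(c)\leq M$ follows.

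For the reverse inequality $f_n(c) \geq M$, take any Hermitian $A$ with $a_{ii}=1$ and $0 \leq A \leq (1+(n-1)c)I$, and realize it as the Gram matrix of unit vectors $x_1,\ldots,x_n \in \mathbb{C}^n$ (via $A = B^*B$ from the spectral decomposition). Put $H_i := \mathbb{C} x_i$ and $P_i := x_i x_i^*$. The same $V$ now gives $\sum_i P_i = VV^*$, so $\|\sum_i P_i\| = \|A\| \leq 1+(n-1)c$, and the admissibility constraint holds. A direct computation shows
$$P_n\cdots P_1 h = \langle h, x_1\rangle \Bigl(\prod_{i=1}^{n-1}\langle x_i, x_{i+1}\rangle\Bigr) x_n,$$
so $\|P_n\cdots P_1\| = |a_{12}a_{23}\cdots a_{n-1,n}|$. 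Thus $f_n(c) \geq |a_{12}\cdots a_{n-1,n}|$ for every admissible $A$, proving $f_n(c) \geq M$. The supremum is attained because the admissible set of matrices is a compact subset of $M_n(\mathbb{C})$ and the objective is continuous.

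The main technical ingredients are the telescoping identity expressing $\|P_n\cdots P_2 x_1\|$ as a product of inner products between consecutive normalized orbit vectors, and the PSD comparison $VV^* \leq \sum_i P_i$ which converts the analytic constraint on $\|\sum_i P_i\|$ into the matrix bound $A \leq (1+(n-1)c)I$. Once these two moves are in place, the rest of the argument is a routine matching of the two formulations.
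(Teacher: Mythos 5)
Your proof is correct and follows essentially the same route as the paper: reduce to $c_D$ via Proposition~\ref{prop:f_n and c_D}, identify admissible matrices with Gram matrices of unit vectors $x_i\in H_i$, and use the telescoping identity $\langle y_{i-1},P_iy_{i-1}\rangle=\|y_i\|^2$ along the orbit to express $\|P_n\cdots P_1\|$ as the product of consecutive off-diagonal Gram entries. The only (harmless) variation is that you verify $A\leqslant(1+(n-1)c)I$ through $\|V^*V\|=\|VV^*\|$ and $VV^*\leqslant\sum_i P_i$ combined with Proposition~\ref{P:c_D and P_1+...+P_n}, whereas the paper checks the quadratic form $\langle Aa,a\rangle$ directly from the definition of $c_D$; these are the same computation in different clothing.
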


Now it's time for some notation.
For an $n\times n$ matrix $A$ set 
$$
\Pi(A):=|a_{12}a_{23}...a_{n-1,n}|.
$$
For a real number $t\geqslant 1$ denote by $\mathcal{H}_n(t)$ the set of all
Hermitian matrices $A=(a_{ij}|i,j=1,...,n)$ such that
$a_{ii}=1$ for $i=1,2,...,n$ and $0\leqslant A\leqslant tI$.
Then Theorem~\ref{Th:sup over matrices} says that
$$
f_n(c)=\max\{\Pi(A)|A\in\mathcal{H}_n(1+(n-1)c)\}.
$$
The following natural problem arises.\\
\textbf{Problem 1}:
find an optimal matrix $A$ for the optimization problem above, i.e.,
a matrix $A\in\mathcal{H}_n(1+(n-1)c)$ such that $\Pi(A)=f_n(c)$.
Or, at least, find 1-diagonal $(a_{12},a_{23},...,a_{n-1,n})$ of an optimal matrix.

It is natural to try to reduce the set of matrices 
on which the function $\Pi$ is considered.
To this end we will use the following lemma.

\begin{lemma}\label{L:lemma good matrix}
Let $t\geqslant 1$.
For arbitrary matrix $A\in\mathcal{H}_n(t)$ 
there exists a matrix $B\in\mathcal{H}_n(t)$ such that
\begin{enumerate}
\item
$b_{ij}\in\mathbb{R}$ for all $i,j=1,2,...,n$ and 
$b_{i,i+1}\geqslant 0$ for $i=1,2,...,n-1$;
\item
$b_{i,j}=b_{n+1-i,n+1-j}$ for all $i,j=1,2,...,n$;
\item
$\Pi(B)\geqslant\Pi(A)$.
\end{enumerate}
\end{lemma}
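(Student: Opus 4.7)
The plan is to build $B$ from $A$ in three successive symmetrization steps, each of which preserves membership in $\mathcal{H}_n(t)$ and does not decrease $\Pi$.

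First, I would conjugate $A$ by a diagonal unitary $D=\mathrm{diag}(e^{i\theta_1},\dots,e^{i\theta_n})$ in order to kill the phases of the superdiagonal entries. Because $D$ is unitary, $A^{(1)}:=D^{*}AD$ has the same spectrum as $A$ and the same diagonal, so $A^{(1)}\in\mathcal{H}_n(t)$; moreover $|A^{(1)}_{i,i+1}|=|a_{i,i+1}|$, so $\Pi(A^{(1)})=\Pi(A)$. Choosing the phases $\theta_i$ recursively so that $e^{i(\theta_{i+1}-\theta_i)}a_{i,i+1}\geqslant 0$ makes the superdiagonal of $A^{(1)}$ real and nonnegative. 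Next, I would symmetrize out the imaginary part by setting $A^{(2)}:=\tfrac{1}{2}(A^{(1)}+\overline{A^{(1)}})$. Conjugation $X\mapsto\overline{X}$ preserves Hermiticity, diagonal entries, positive semidefiniteness and the bound $\leqslant tI$, and $\mathcal{H}_n(t)$ is convex, so $A^{(2)}\in\mathcal{H}_n(t)$. All entries of $A^{(2)}$ are real, and the superdiagonal is unchanged (it is already real), so $\Pi(A^{(2)})=\Pi(A)$.

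For the persymmetry, I would use the flip matrix $J$ with $J_{ij}=\delta_{i,n+1-j}$, which satisfies $J=J^{*}=J^{-1}$. Then $J A^{(2)}J$ is unitarily similar to $A^{(2)}$, has the same (unit) diagonal, hence lies in $\mathcal{H}_n(t)$; its entries are real and $(JA^{(2)}J)_{i,i+1}=A^{(2)}_{n-i,n-i+1}\geqslant 0$. Setting
\[
B:=\tfrac{1}{2}\bigl(A^{(2)}+J A^{(2)}J\bigr),
\]
convexity of $\mathcal{H}_n(t)$ yields $B\in\mathcal{H}_n(t)$, and $JBJ=B$ is exactly the persymmetry condition $b_{ij}=b_{n+1-i,n+1-j}$. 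The entries of $B$ are real, and each superdiagonal entry $b_{i,i+1}=\tfrac{1}{2}(s_i+s_{n-i})$ with $s_i:=A^{(2)}_{i,i+1}\geqslant 0$ is nonnegative, so properties (1) and (2) of the lemma hold.

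The only step that requires a genuine inequality is property (3), $\Pi(B)\geqslant\Pi(A)$. Writing
\[
\Pi(B)=\prod_{i=1}^{n-1}\frac{s_i+s_{n-i}}{2},
\]
I would pair the index $i$ with $n-i$. When $n$ is even the index $i=n/2$ is a fixed point and contributes the factor $s_{n/2}$ itself; the remaining indices split into disjoint pairs $\{i,n-i\}$. For each such pair AM--GM gives $\bigl((s_i+s_{n-i})/2\bigr)^{2}\geqslant s_i s_{n-i}$. Multiplying these pair inequalities, together with the fixed-point factor when present, and recalling that each $s_i$ appears exactly once in the product $\prod_{i=1}^{n-1}s_i=\Pi(A^{(2)})=\Pi(A)$, one obtains $\Pi(B)\geqslant\Pi(A)$. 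I expect this AM--GM pairing to be the only delicate point; the preceding two symmetrizations are standard manipulations that leave $\Pi$ untouched.
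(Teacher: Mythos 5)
Your proof is correct and follows essentially the same three-step symmetrization as the paper: a diagonal unitary conjugation to make the superdiagonal nonnegative, averaging with the (conjugate) transpose to make all entries real, and averaging with the flip-conjugate followed by the AM--GM pairing of indices $i$ and $n-i$ to get $\Pi(B)\geqslant\Pi(A)$. The only cosmetic difference is that the paper writes the second step as $\tfrac12(B+B^{\top})$ rather than $\tfrac12(B+\overline{B})$ (identical for Hermitian matrices) and multiplies the inequalities $d_{i,i+1}\geqslant\sqrt{c_{i,i+1}c_{n-i,n-i+1}}$ directly instead of pairing, but these are the same argument.
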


Denote by $\mathcal{H}_n'(t)$ the set of all matrices $A\in\mathcal{H}_n(t)$
such that
\begin{enumerate}
\item
$a_{ij}\in\mathbb{R}$ for all $i,j=1,...,n$ and 
$a_{i,i+1}\geqslant 0$ for all $i=1,2,...,n-1$;
\item
$a_{ij}=a_{n+1-i,n+1-j}$ for all $i,j=1,2,...,n$.
\end{enumerate}
For example, matrices from $\mathcal{H}_3'(t)$ have the form
\begin{equation*}
A=\begin{pmatrix}
1&x&y\\
x&1&x\\
y&x&1
\end{pmatrix},
\end{equation*}
where $x\geqslant 0$ and $y\in\mathbb{R}$;
matrices from $\mathcal{H}_4'(t)$ have the form
\begin{equation*}
\begin{pmatrix}
1&x&z&w\\
x&1&y&z\\
z&y&1&x\\
w&z&x&1
\end{pmatrix},
\end{equation*}
where $x\geqslant 0$, $y\geqslant 0$, $w,z\in\mathbb{R}$.

Using Lemma~\ref{L:lemma good matrix}, we can write
$$
f_n(c)=\max\{\Pi(A)|A\in\mathcal{H}_n'(1+(n-1)c)\}.
$$
Now it is natural to specify Problem 1.\\
\textbf{Problem 1$'$}: find an optimal matrix $A\in\mathcal{H}_n'(1+(n-1)c)$
for the optimization problem above.
Or, at least, find a 1-diagonal $(a_{12},a_{23},...,a_{n-1,n})$ 
of an optimal matrix $A\in\mathcal{H}_n'(1+(n-1)c)$.

\begin{remark}
It is worth mentioning that there exists a unique optimal 1-diagonal\\
$(a_{12},a_{23},...,a_{n-1,n})$
(i.e., 1-diagonal of an optimal matrix) for which $a_{i,i+1}\geqslant 0$, $i=1,2,...,n-1$.
Indeed, assume that $A,B\in\mathcal{H}_n(1+(n-1)c)$ are optimal 
and $a_{i,i+1}\geqslant 0$, $b_{i,i+1}\geqslant 0$ for $i=1,2,...,n-1$.
We claim that $a_{i,i+1}=b_{i,i+1}$, $i=1,2,...,n-1$.

If $c=0$, then $\mathcal{H}_n(1)=\{I\}$ and our assertion is clear.
Assume that $c>0$.
Then $a_{i,i+1}>0$ and $b_{i,i+1}>0$ for $i=1,2,...,n-1$.
Consider the matrix $C:=1/2(A+B)$.
It is clear that $C\in\mathcal{H}_n(1+(n-1)c)$.
Since $c_{i,i+1}=1/2(a_{i,i+1}+b_{i,i+1})\geqslant\sqrt{a_{i,i+1}b_{i,i+1}}$, we conclude that
$\Pi(C)\geqslant\sqrt{\Pi(A)\Pi(B)}=f_n(c)$.
If follows that $\Pi(C)=f_n(c)$ and consequently $a_{i,i+1}=b_{i,i+1}$ for all $i=1,2,...,n-1$.
\end{remark}

\subsection{The function $f_3$}

Now we are ready to find $f_3$.

\begin{theorem}\label{Th:f3}
We have
$$
f_3(c)=\begin{cases}
4c^2,\quad\text{if}\quad c\in[0,1/4],\\
c,\quad\text{if}\quad c\in[1/4,1].
\end{cases}
$$
For $c\in[0,1/4]$ the matrix
\begin{equation*}
\begin{pmatrix}
1   &  2c  &-2c\\
2c  &  1   & 2c\\
-2c &  2c  & 1
\end{pmatrix}
\end{equation*}
is optimal, for $c\in[1/4,1]$ the matrix
\begin{equation*}
\begin{pmatrix}
1        &\sqrt{c}  &  2c-1\\
\sqrt{c} &1         &\sqrt{c}\\
2c-1     &\sqrt{c}  & 1
\end{pmatrix}
\end{equation*}
is optimal.
\end{theorem}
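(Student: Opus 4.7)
The plan is to invoke Theorem~\ref{Th:sup over matrices} together with Lemma~\ref{L:lemma good matrix} to replace the supremum over operator norms with the maximum of $\Pi$ over $\mathcal{H}_3'(1+2c)$. Every matrix in this set has the explicit form
$$
A=\begin{pmatrix} 1 & x & y \\ x & 1 & x \\ y & x & 1 \end{pmatrix},\quad x\geqslant 0,\ y\in\mathbb{R},
$$
so $\Pi(A)=x^2$, and the entire task reduces to a two-variable constrained optimization: maximize $x^2$ subject to $0\leqslant A\leqslant(1+2c)I$.

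Next, I would diagonalize $A$ using its centrosymmetric structure. The vector $(1,0,-1)^{t}$ is an eigenvector with eigenvalue $1-y$, and on its orthogonal complement $A$ acts as a $2\times 2$ block with eigenvalues $(2+y\pm\sqrt{y^2+8x^2})/2$. Translating $A\geqslant 0$ and $A\leqslant(1+2c)I$ into inequalities on these three eigenvalues, one obtains
$$
-2c\leqslant y\leqslant 1,\qquad x^2\leqslant\tfrac{1+y}{2},\qquad x^2\leqslant c(2c-y),
$$
so that
$$
f_3(c)=\max\bigl\{x^2:\,-2c\leqslant y\leqslant 1,\ x^2\leqslant\min\{(1+y)/2,\,c(2c-y)\}\bigr\}.
$$

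The two upper bounds on $x^2$ are linear in $y$, one increasing and the other decreasing, meeting at the unique point $y=2c-1$, where both equal $c$. A case split at $c=1/4$, the threshold at which $2c-1$ enters the admissible interval $[-2c,1]$, finishes the optimization. For $c\geqslant 1/4$ the crossing point $y=2c-1$ is admissible, both bounds become binding, and the optimum is $x^2=c$, attained with $x=\sqrt{c}$, $y=2c-1$. For $c\leqslant 1/4$ the crossing point lies below $-2c$; on the admissible interval the active constraint is $x^2\leqslant c(2c-y)$ (one checks $(1-2c)/2\geqslant 4c^2$ at $y=-2c$), and this is maximized at the left endpoint $y=-2c$, giving $x^2=4c^2$ and $x=2c$.

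Substituting the two optima $(x,y)=(\sqrt c,\,2c-1)$ and $(x,y)=(2c,\,-2c)$ back into the matrix template reproduces exactly the two optimal matrices in the statement, which finishes the proof. I do not expect any real obstacle: the only structural input is the parameterization supplied by Lemma~\ref{L:lemma good matrix}, and after that the argument is eigenvalue bookkeeping followed by a one-parameter piecewise-linear minimization whose answer happens to have a clean closed form.
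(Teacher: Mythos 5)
Your proposal is correct and follows essentially the same route as the paper: reduce via Theorem~\ref{Th:sup over matrices} and Lemma~\ref{L:lemma good matrix} to maximizing $x^2$ over the two-parameter family in $\mathcal{H}_3'(1+2c)$, translate $0\leqslant A\leqslant(1+2c)I$ into the constraints $x^2\leqslant(1+y)/2$ and $x^2\leqslant c(2c-y)$ with $-2c\leqslant y\leqslant 1$, and locate the crossing point $y=2c-1$ with the case split at $c=1/4$. The only (cosmetic) difference is that you obtain the constraints by explicitly diagonalizing the centrosymmetric matrix, whereas the paper uses the principal-minor criterion; both yield the same feasible region and the same optimal matrices.
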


\subsection{On the functions $f_n$ with $n\geqslant 4$}

For $n=4$, to find $f_4(c)$ one have to consider matrices of the form
\begin{equation*}
\begin{pmatrix}
1&x&z&w\\
x&1&y&z\\
z&y&1&x\\
w&z&x&1
\end{pmatrix},
\end{equation*}
where $x\geqslant 0$, $y\geqslant 0$, $w,z\in\mathbb{R}$,
and have to maximize $x^2 y$.
We could not find $f_4$ (and $f_n$ for $n\geqslant 4$).
Nevertheless we have the following theorem.

\begin{theorem}\label{Th:f_n}
Let $n\geqslant 2$ and $c\in[0,1/(n-1)^2]$.
Then $f_n(c)=(n-1)^{n-1}c^{n-1}$ and the matrix $A\in\mathcal{H}_n'(1+(n-1)c)$ defined by
$$a_{ij}=\begin{cases}
1,\quad\text{if}\quad i=j,\\
(-1)^{i+j+1}(n-1)c,\quad\text{if}\quad i\neq j
\end{cases}
$$
is optimal.
\end{theorem}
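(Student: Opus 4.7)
The plan is to prove matching upper and lower bounds. By Theorem~\ref{Th:sup over matrices}, we have $f_n(c)=\max\{\Pi(A)\mid A\in\mathcal{H}_n(1+(n-1)c)\}$, so it suffices to (i) show every such $A$ satisfies $\Pi(A)\leqslant (n-1)^{n-1}c^{n-1}$, and (ii) verify that the proposed matrix lies in $\mathcal{H}_n'(1+(n-1)c)$ and attains this value.

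For the upper bound, I would argue entrywise. Fix $A\in\mathcal{H}_n(1+(n-1)c)$ with $t:=1+(n-1)c$. For each $i\in\{1,\ldots,n-1\}$, restricting the inequality $A\leqslant tI$ to vectors supported on coordinates $\{i,i+1\}$ shows that the $2\times 2$ principal submatrix
\begin{equation*}
\begin{pmatrix} 1 & a_{i,i+1}\\ \overline{a_{i,i+1}} & 1\end{pmatrix}
\end{equation*}
is $\leqslant tI_{2}$. Its largest eigenvalue is $1+|a_{i,i+1}|$, so $|a_{i,i+1}|\leqslant t-1=(n-1)c$. Taking the product over $i=1,\ldots,n-1$ gives $\Pi(A)\leqslant ((n-1)c)^{n-1}=(n-1)^{n-1}c^{n-1}$. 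Notice this step uses only the upper constraint $A\leqslant tI$ and makes no use of the hypothesis $c\leqslant 1/(n-1)^2$; the latter enters only to ensure that the extremizer exists.

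For the lower bound, I would show that the proposed matrix $A$ achieves the upper bound. Set $\alpha:=(n-1)c$, let $\mathbf{1}=(1,\ldots,1)^t\in\mathbb{C}^n$, and let $D=\mathrm{diag}((-1)^1,(-1)^2,\ldots,(-1)^n)$. A direct check of entries gives
\begin{equation*}
A=(1+\alpha)I-\alpha\,(D\mathbf{1})(D\mathbf{1})^t,
\end{equation*}
and since $(D\mathbf{1})(D\mathbf{1})^t$ is rank one with nonzero eigenvalue $\|D\mathbf{1}\|^2=n$, the spectrum of $A$ consists of $1+\alpha$ (multiplicity $n-1$) and $1+\alpha-n\alpha=1-(n-1)^2 c$ (multiplicity $1$). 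The hypothesis $c\leqslant 1/(n-1)^2$ yields $1-(n-1)^2c\geqslant 0$, so $0\leqslant A\leqslant(1+\alpha)I$, confirming $A\in\mathcal{H}_n(1+(n-1)c)$. The remaining membership conditions for $\mathcal{H}_n'$ are immediate from the explicit formula: entries are real, $a_{i,i+1}=\alpha\geqslant 0$, and $(-1)^{i+j+1}=(-1)^{(n+1-i)+(n+1-j)+1}$ gives the reflection symmetry. Finally, $\Pi(A)=\alpha^{n-1}=(n-1)^{n-1}c^{n-1}$, matching the upper bound.

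The argument has no real obstacle: the upper bound is essentially a one-line observation about $2\times 2$ principal submatrices, and the only genuine content is the (guessed) construction of the extremal matrix with the sign pattern $(-1)^{i+j+1}$, which is what makes the rank-one subtraction land on the eigenvector $D\mathbf{1}$ and simultaneously saturate all the $n-1$ subdiagonal constraints.
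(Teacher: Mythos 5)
Your proof is correct and follows essentially the same route as the paper: the upper bound comes from bounding each $|a_{i,i+1}|$ by $(n-1)c$ via $2\times 2$ principal submatrices of $(1+(n-1)c)I-A$, and your extremal matrix $(1+\alpha)I-\alpha(D\mathbf{1})(D\mathbf{1})^{t}$ is exactly the paper's $U^{*}(I+(n-1)c(I-J))U$ with the same rank-one spectral computation. The only cosmetic difference is that the paper first exhibits the unsigned matrix and then conjugates by $\mathrm{diag}(-1,1,-1,\dots)$ to land in $\mathcal{H}_n'$, whereas you write the signed matrix directly.
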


Although we could not find $f_n$ for $n\geqslant 4$,
we know some properties of the function.
Firstly, note that $f_n$ is non-decreasing on $[0,1]$
(it follows directly from the definition of $f_n$).

\begin{theorem}\label{Th:convexity_fn}
The function $f_n^{1/(n-1)}$ is concave on $[0,1]$.
\end{theorem}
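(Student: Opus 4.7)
The plan is to exploit the optimization characterization $f_n(c)=\max\{\Pi(A)\mid A\in\mathcal{H}_n(1+(n-1)c)\}$ from Theorem~\ref{Th:sup over matrices}, together with Lemma~\ref{L:lemma good matrix}, which lets us pick optimal matrices whose 1-diagonal entries $a_{i,i+1}$ are nonnegative. Concavity then follows from the concavity of the geometric mean, because $\Pi(A)^{1/(n-1)}=(a_{12}a_{23}\cdots a_{n-1,n})^{1/(n-1)}$ is exactly the geometric mean of the $n-1$ nonnegative numbers $a_{12},a_{23},\ldots,a_{n-1,n}$ for $A\in\mathcal{H}_n'(1+(n-1)c)$.

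More precisely, fix $c_1,c_2\in[0,1]$ and $\lambda\in[0,1]$, and set $c:=\lambda c_1+(1-\lambda)c_2$. By the reduction to $\mathcal{H}_n'$, choose optimal matrices $A_1\in\mathcal{H}_n'(1+(n-1)c_1)$ and $A_2\in\mathcal{H}_n'(1+(n-1)c_2)$, so that $\Pi(A_j)=f_n(c_j)$ and $(A_j)_{i,i+1}\geqslant 0$ for $i=1,\ldots,n-1$ and $j=1,2$. Form the convex combination $C:=\lambda A_1+(1-\lambda)A_2$. The diagonal of $C$ is still the all-ones diagonal; positivity $C\geqslant 0$ is preserved under convex combinations; and
$$
C\leqslant\lambda(1+(n-1)c_1)I+(1-\lambda)(1+(n-1)c_2)I=(1+(n-1)c)I,
$$
so $C\in\mathcal{H}_n(1+(n-1)c)$ and therefore $f_n(c)\geqslant\Pi(C)$.

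The core inequality is then the concavity of the geometric mean on $[0,\infty)^{n-1}$: for nonnegative reals $u_i,v_i$,
$$
\prod_{i=1}^{n-1}\bigl(\lambda u_i+(1-\lambda)v_i\bigr)^{1/(n-1)}\geqslant \lambda\prod_{i=1}^{n-1}u_i^{1/(n-1)}+(1-\lambda)\prod_{i=1}^{n-1}v_i^{1/(n-1)}.
$$
Applying this with $u_i=(A_1)_{i,i+1}$ and $v_i=(A_2)_{i,i+1}$ (both nonnegative, which is the whole point of passing to $\mathcal{H}_n'$) yields
$$
\Pi(C)^{1/(n-1)}\geqslant\lambda\,\Pi(A_1)^{1/(n-1)}+(1-\lambda)\Pi(A_2)^{1/(n-1)}=\lambda f_n(c_1)^{1/(n-1)}+(1-\lambda)f_n(c_2)^{1/(n-1)}.
$$
Combined with $f_n(c)\geqslant\Pi(C)$, this gives $f_n(c)^{1/(n-1)}\geqslant\lambda f_n(c_1)^{1/(n-1)}+(1-\lambda)f_n(c_2)^{1/(n-1)}$, which is exactly the desired concavity.

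There is essentially no hard step: the potential obstacles are the reduction to nonnegative 1-diagonals and the existence of an optimal matrix, but both are already in the excerpt (Lemma~\ref{L:lemma good matrix} and Theorem~\ref{Th:sup over matrices}). The only nontrivial input is the classical fact that the geometric mean is concave on $[0,\infty)^{n-1}$, which can be cited or derived in one line from the weighted AM--GM inequality.
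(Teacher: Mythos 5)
Your proposal is correct and follows essentially the same route as the paper: choose optimal matrices with nonnegative $1$-diagonals, pass to their convex combination inside $\mathcal{H}_n(1+(n-1)(\lambda c_1+(1-\lambda)c_2))$, and apply the concavity (equivalently, superadditivity) of the geometric mean. The paper phrases the key inequality as $\sqrt[m]{(s_1+t_1)\cdots(s_m+t_m)}\geqslant\sqrt[m]{s_1\cdots s_m}+\sqrt[m]{t_1\cdots t_m}$, which is the same fact you invoke.
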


\begin{corollary}\label{C:continuity of f_n}
The function $f_n$ is continuous on $[0,1]$.
\end{corollary}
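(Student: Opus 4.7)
The plan is to deduce continuity of $f_n$ on $[0,1]$ directly from Theorem~\ref{Th:convexity_fn} together with Theorem~\ref{Th:f_n}. First, I would set $g := f_n^{1/(n-1)}$; since $g$ is non-negative and $f_n = g^{n-1}$, it suffices to establish continuity of $g$. By Theorem~\ref{Th:convexity_fn}, $g$ is concave on $[0,1]$, and a concave function on a convex subset of $\mathbb{R}$ is automatically continuous on the interior of its domain. Thus $g$, and hence $f_n$, is continuous on $(0,1)$ for free, and only the two endpoints remain.

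For continuity at $c = 1$, I would combine monotonicity of $f_n$ (clear from the definition, since enlarging the admissible range $c_F(H_1,\dots,H_n)\leqslant c$ cannot decrease the supremum) with the secant-line inequality implied by concavity. Monotonicity yields $\limsup_{c \to 1^-} f_n(c) \leqslant f_n(1)$. Concavity gives $g(c) \geqslant c\,g(1) + (1-c)\,g(0)$ for $c \in [0,1]$; the right-hand side tends to $g(1)$ as $c \to 1^-$, so $\liminf_{c \to 1^-} g(c) \geqslant g(1)$, and therefore $\liminf_{c \to 1^-} f_n(c) \geqslant f_n(1)$. The two inequalities force equality and hence continuity at $c=1$.

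The main obstacle is continuity at $c = 0$: concavity and monotonicity together do not in general preclude a jump discontinuity at a left endpoint (witness $g(0) = 0$ and $g(c) = 1$ for $c > 0$, which is simultaneously monotone and concave on $[0,1]$). To bypass this, I would invoke Theorem~\ref{Th:f_n}, which gives the explicit formula $f_n(c) = (n-1)^{n-1} c^{n-1}$ throughout $[0, 1/(n-1)^2]$. This polynomial expression is manifestly continuous at $0$, so $f_n(c) \to 0 = f_n(0)$ as $c \to 0^+$, which closes the last gap and yields continuity of $f_n$ on all of $[0,1]$.
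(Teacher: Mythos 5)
Your proposal is correct and follows essentially the same route as the paper: reduce to $g=f_n^{1/(n-1)}$, use concavity (Theorem~\ref{Th:convexity_fn}) for continuity on the open interval and for the lower secant bound at $c=1$ combined with monotonicity for the upper bound, and invoke the explicit formula of Theorem~\ref{Th:f_n} near $c=0$. The only cosmetic difference is that the paper evaluates $g(0)=0$ and $g(1)=1$ explicitly to write the bound as $c\leqslant g(c)\leqslant 1$, whereas you use the general secant inequality; the substance is identical.
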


\begin{theorem}\label{Th:functional equation}
The function $f_n$ satisfies the following functional equation:
\begin{equation*}
f_n\left(\dfrac{1}{(n-1)^2 c}\right)=\dfrac{f_n(c)}{(n-1)^{n-1}c^{n-1}}
\end{equation*}
for $c\in[1/(n-1)^2,1]$.
\end{theorem}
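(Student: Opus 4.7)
The plan is to exploit a duality on the set $\mathcal{H}_n(t)$ provided by the affine involution $A \mapsto (tI - A)/(t-1)$ (valid whenever $t > 1$), and to combine this with the matrix-side description of $f_n$ given by Theorem~\ref{Th:sup over matrices}. The key observation is that the parameter transformation $c \mapsto c' := 1/((n-1)^2 c)$ corresponds, under the substitution $t = 1 + (n-1)c$, to the transformation $t \mapsto t/(t-1)$; indeed, a direct computation gives $t/(t-1) = 1 + 1/((n-1)c) = 1 + (n-1)c'$.

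First, I would fix $c \in [1/(n-1)^2, 1]$ (so that $c > 0$), set $t := 1 + (n-1)c > 1$, and define the map $\Phi(A) := (tI - A)/(t-1)$. The verification that $\Phi$ sends $\mathcal{H}_n(t)$ into $\mathcal{H}_n(t/(t-1))$ is routine: Hermiticity is clearly preserved; the diagonal entries $a_{ii} = 1$ give $(tI - A)_{ii} = t-1$, so that $\Phi(A)$ has unit diagonal; and the spectral condition $0 \leqslant A \leqslant tI$ rewrites as $0 \leqslant tI - A \leqslant tI$, which upon division by $t-1$ yields $0 \leqslant \Phi(A) \leqslant (t/(t-1))I$. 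The inverse of $\Phi$ is given by $B \mapsto tI - (t-1)B$, and an identical check shows it sends $\mathcal{H}_n(t/(t-1))$ back into $\mathcal{H}_n(t)$, so $\Phi$ is a bijection between $\mathcal{H}_n(1+(n-1)c)$ and $\mathcal{H}_n(1+(n-1)c')$.

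Next, I would compute $\Pi(\Phi(A))$. Since the relevant off-diagonal entries of $\Phi(A)$ are $a'_{i,i+1} = -a_{i,i+1}/(t-1)$, taking absolute values and multiplying over $i=1,\dots,n-1$ gives
\begin{equation*}
\Pi(\Phi(A)) = \frac{\Pi(A)}{(t-1)^{n-1}} = \frac{\Pi(A)}{((n-1)c)^{n-1}} = \frac{\Pi(A)}{(n-1)^{n-1} c^{n-1}}.
\end{equation*}
Taking the maximum over $A \in \mathcal{H}_n(1+(n-1)c)$ on both sides and applying Theorem~\ref{Th:sup over matrices} twice then yields
\begin{equation*}
f_n(c') = \max_{A \in \mathcal{H}_n(1+(n-1)c)} \Pi(\Phi(A)) = \frac{f_n(c)}{(n-1)^{n-1} c^{n-1}},
\end{equation*}
which is the claimed functional equation.

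I do not anticipate a serious obstacle: the entire argument hinges on spotting the affine involution $A \mapsto (tI - A)/(t-1)$ and on the arithmetic identity $t/(t-1) = 1 + (n-1)c'$ that matches the two parameters. The restriction $c \in [1/(n-1)^2, 1]$ is precisely the range on which $c'$ also lies in $[1/(n-1)^2, 1]$, so the functional equation is self-dual on this interval, which is a useful internal consistency check for the proof.
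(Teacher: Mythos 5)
Your proposal is correct and is essentially identical to the paper's own proof: the map $\Phi(A)=(tI-A)/(t-1)$ with $t=1+(n-1)c$ is exactly the bijection $A\mapsto B=((1+(n-1)c)I-A)/((n-1)c)$ used there, and the verification of the membership, the bijectivity, and the scaling $\Pi(\Phi(A))=\Pi(A)/((n-1)^{n-1}c^{n-1})$ all coincide. No gaps.
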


Regarding Problem 1$'$, we have the following criterion for a matrix to be optimal.

\begin{proposition}\label{P:optimal matrix}
Let $c>0$ and a matrix $A\in\mathcal{H}_n(1+(n-1)c)$ 
be such that $a_{i,i+1}>0$ for $i=1,2,...,n$.
Then $A$ is optimal, i.e., $\Pi(A)=f_n(c)$ if and only if
$$
\dfrac{b_{12}}{a_{12}}+\dfrac{b_{23}}{a_{23}}+...+\dfrac{b_{n-1,n}}{a_{n-1,n}}\leqslant n-1
$$
for arbitrary matrix $B\in\mathcal{H}_n(1+(n-1)c)$ 
with $b_{i,i+1}\geqslant 0$ for $i=1,2,...,n-1$.
\end{proposition}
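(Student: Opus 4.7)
The plan is to recognize this proposition as the standard first-order optimality condition for the concave maximization of $\log\Pi$ over the convex feasible set $\mathcal{H}_n(1+(n-1)c)$. The set is convex because it is cut from the real affine subspace $\{A=A^{*}:a_{ii}=1\}$ by the two positive-semidefiniteness conditions $A\geq 0$ and $A\leq(1+(n-1)c)I$. Thanks to the hypothesis $a_{i,i+1}>0$, the product $\Pi(A)$ coincides with $\prod_{i=1}^{n-1}a_{i,i+1}$ without absolute values, and the same holds for any matrix $B$ with $b_{i,i+1}\geq 0$, which is precisely the setting in which the calculus and AM-GM arguments below go through cleanly.

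For the necessity direction I would fix a feasible $B$ with $b_{i,i+1}\geq 0$ and form the convex combination $A_t:=(1-t)A+tB$, which lies in $\mathcal{H}_n(1+(n-1)c)$ for every $t\in[0,1]$. The scalar function
$$
g(t):=\prod_{i=1}^{n-1}\bigl((1-t)a_{i,i+1}+tb_{i,i+1}\bigr)
$$
equals $\Pi(A_t)$ on $[0,1]$ and is smooth near $t=0$, since every factor is strictly positive there. Optimality of $A$ forces $g(t)\leq g(0)$ on $[0,1]$, hence $g'(0)\leq 0$. A logarithmic derivative at $t=0$ yields
$$
\frac{g'(0)}{g(0)}=\sum_{i=1}^{n-1}\frac{b_{i,i+1}-a_{i,i+1}}{a_{i,i+1}},
$$
and dividing by $g(0)=\Pi(A)>0$ rearranges to the claimed inequality.

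For sufficiency, I would assume the inequality holds for every feasible $B$ with $b_{i,i+1}\geq 0$. Applying the AM-GM inequality to the nonnegative numbers $b_{i,i+1}/a_{i,i+1}$ gives
$$
\Bigl(\prod_{i=1}^{n-1}\frac{b_{i,i+1}}{a_{i,i+1}}\Bigr)^{1/(n-1)}\leq\frac{1}{n-1}\sum_{i=1}^{n-1}\frac{b_{i,i+1}}{a_{i,i+1}}\leq 1,
$$
so $\Pi(B)\leq\Pi(A)$. Lemma~\ref{L:lemma good matrix} then upgrades this to $\Pi(A)\geq\Pi(B')$ for every $B'\in\mathcal{H}_n(1+(n-1)c)$, because any such $B'$ can be replaced by some $B$ with $b_{i,i+1}\geq 0$ and $\Pi(B)\geq\Pi(B')$. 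Hence $\Pi(A)=f_n(c)$.

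I do not anticipate a real obstacle; the only subtlety is to route both directions through the sub-class of matrices with nonnegative superdiagonal entries via Lemma~\ref{L:lemma good matrix}, so that $\Pi$ reduces to the plain product of the superdiagonal and the one-variable calculus computation in the necessity step and the AM-GM estimate in the sufficiency step apply without absolute-value complications.
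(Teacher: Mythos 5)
Your proof is correct and follows essentially the same route as the paper: the necessity direction via the convex combination $(1-t)A+tB$ and the sign of the derivative of $\Pi$ at $t=0$, and the sufficiency direction via AM--GM applied to the ratios $b_{i,i+1}/a_{i,i+1}$, with Lemma~\ref{L:lemma good matrix} reducing to matrices with nonnegative superdiagonal. Your explicit appeal to that lemma in the sufficiency step is a point the paper leaves implicit, but the argument is the same.
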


\subsection{Bounds for $f_n(c)$: known upper bounds}

In this subsection we present known upper bounds for $f_n(c)$.
The upper bounds are of great interest because using them
one can easily estimate the rate of convergence in the method of cyclic alternating projections
(see subsection~\ref{ss:application}).

Let $H$ be a Hilbert space and $H_1,...,H_n$ be closed subspaces of $H$.
Denote by $P_i$ the orthogonal projection onto $H_i$, $i=1,...,n$.
Set $H_0:=H_1\cap H_2\cap...\cap H_n$.
Denote by $P_0$ the orthogonal projection onto $H_0$.
Set $c_F:=c_F(H_1,...,H_n)$.
Let $c\in[0,1]$.
In what follows we assume that $c_F\leqslant c$.

Badea, Grivaux and M\"{u}ller \cite{BGM11} showed that
\begin{equation*}
\|P_n...P_2 P_1-P_0\|\leqslant\sqrt{1-\dfrac{(1-c_F)^2}{16n^2}}.
\end{equation*}
It follows that
\begin{equation*}
f_n(c)\leqslant\sqrt{1-\dfrac{(1-c)^2}{16n^2}}.
\end{equation*}

Badea and Seifert \cite{BS16} showed that
\begin{equation*}
\|P_n...P_2 P_1-P_0\|\leqslant\sqrt{1-\dfrac{3(n-1)}{n^3}(1-c_F)}.
\end{equation*}
It follows that
\begin{equation*}
f_n(c)\leqslant\sqrt{1-\dfrac{3(n-1)}{n^3}(1-c)}.
\end{equation*}

\subsection{Our upper bound for $f_n(c)$}

\begin{theorem}\label{Th:inequality for fn}
The following inequality holds:
\begin{equation*}
f_n(c)\leqslant
\sqrt{\dfrac{n-4(n-1)(\sin^2(\pi/(2n)))(1-c)}{n+4(n-1)^2(\sin^2(\pi/(2n)))(1-c)}}
\end{equation*}
for every $c\in[0,1]$.
\end{theorem}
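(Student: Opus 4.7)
The plan is to work through the matrix reformulation. By Theorem~\ref{Th:sup over matrices} and Lemma~\ref{L:lemma good matrix}, with $t:=1+(n-1)c$ the goal reduces to proving
\begin{equation*}
\Pi(A)^2 \le \frac{n-4(n-1)s(1-c)}{n+4(n-1)^2 s(1-c)}
\qquad\bigl(s:=\sin^2(\pi/(2n))\bigr)
\end{equation*}
for every real symmetric $A\in\mathcal{H}_n'(t)$. Writing $F:=\Pi(A)^2$ and $\epsilon:=n-t=(n-1)(1-c)$, this is equivalent to the cleaner polynomial inequality
\begin{equation*}
n(1-F)\;\ge\;4s\,\epsilon\,\bigl(1+(n-1)F\bigr),
\end{equation*}
which is what I would target.

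First, I would represent $A=V^\top V$ with $V\in\mathbb{R}^{m\times n}$ having unit columns $v_1,\ldots,v_n$, so that $A\le tI$ becomes the frame-type bound $\|\sum\alpha_iv_i\|^2\le t\|\alpha\|^2$ for all $\alpha\in\mathbb{R}^n$, while $A\ge 0$ is just Gram positivity.

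Second, I would introduce as test data the eigenvector $\omega=(\sin(j\pi/n))_{j=1}^{n-1}$ of the $(n-1)\times(n-1)$ tridiagonal Dirichlet Laplacian $T_{n-1}$ (entries $2$ on the diagonal, $-1$ off-diagonally) for its smallest eigenvalue $4s$: the identities $T_{n-1}\omega=4s\omega$ (equivalently $\omega_{j-1}+\omega_{j+1}=2\cos(\pi/n)\omega_j$) and $\sum_j\omega_j^2=n/2$ are the sources of the constants $4s$ and $n$ in the bound. The central computation is to apply the frame inequality to the two natural padded extensions
\begin{equation*}
\alpha=(\omega_1,\ldots,\omega_{n-1},0)^\top,\qquad \beta=(0,\omega_1,\ldots,\omega_{n-1})^\top,
\end{equation*}
and to their combinations $\alpha\pm\beta$, then use the Dirichlet recurrence together with the Cauchy--Schwarz bound $|\alpha^\top A\beta|\le\sqrt{(\alpha^\top A\alpha)(\beta^\top A\beta)}$, to derive a scalar inequality coupling $\epsilon$, the weighted sum $\sum_{i=1}^{n-1}\omega_i\omega_{i+1}a_{i,i+1}$, and $F$ itself.

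Third, I would apply the weighted AM--GM inequality with weights proportional to $\omega_i\omega_{i+1}$ (the total weight being $\sum_i \omega_i\omega_{i+1}=(n/2)\cos(\pi/n)$ by a product-to-sum identity) to convert this sum-bound into a product bound on $\Pi(A)=\prod_{i=1}^{n-1}a_{i,i+1}$, and then square and rearrange to reach the stated fractional form. The hard step will be the second one: the asymmetric appearance of $(n-1)$ in the numerator against $(n-1)^2$ in the denominator is a strong indication that $F$ must enter \emph{quadratically} --- once linearly, through the direct use of $A\le tI$ on $\alpha+\beta$, and once through the Cauchy--Schwarz cross-term $\alpha^\top A\beta$, which injects a factor $\sqrt F$. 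Making the constants line up so that the Dirichlet eigenvalue $4s=4\sin^2(\pi/(2n))$ appears with exactly the multipliers $4(n-1)s$ and $4(n-1)^2 s$ is the delicate algebraic crux of the proof.
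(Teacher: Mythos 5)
There is a genuine gap: your proposal is a strategy outline whose central computation --- the step you yourself flag as ``the delicate algebraic crux'' --- is not carried out, and at least two of the sketched steps do not work as stated. First, the Cauchy--Schwarz cross term $\alpha^\top A\beta=\sum_{i,j}\omega_i a_{i,j+1}\omega_j$ involves \emph{all} entries of $A$, not just the superdiagonal, so there is no evident reason it should ``inject a factor $\sqrt{F}$'' with $F=\Pi(A)^2$. Second, and more decisively, weighted AM--GM with weights proportional to $\omega_i\omega_{i+1}$ converts a bound on $\sum_i\omega_i\omega_{i+1}a_{i,i+1}$ into a bound on the \emph{weighted} geometric mean $\prod_i a_{i,i+1}^{\omega_i\omega_{i+1}/W}$, which is not $\Pi(A)=\prod_i a_{i,i+1}$ unless the weights are equal (they are not: $\omega_j=\sin(j\pi/n)$ varies with $j$). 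So the final step of your plan cannot produce the stated bound on $\Pi(A)$ without a further idea. You have, however, correctly identified the equivalent target $n(1-F)\geqslant 4s\,\epsilon\,(1+(n-1)F)$ and the correct spectral source of the constant $4\sin^2(\pi/(2n))$.

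For comparison, the paper does not pass through the matrix reformulation at all. It uses Proposition~\ref{prop:f_n and c_D} to reduce to bounding $\|P_n\cdots P_1\|$ under $c_D\leqslant c$, and the only spectral input is the quadratic-form comparison $\sum_{i<j}\|v_i-v_j\|^2\leqslant D_n\sum_{i=1}^{n-1}\|v_i-v_{i+1}\|^2$ with $D_n=n/(4\sin^2(\pi/(2n)))$, coming from the algebraic connectivity of the path graph (Lemmas~\ref{L:difference R} and~\ref{L:difference H}). The product of projections then enters not via AM--GM but via the choice $x_i=P_i\cdots P_1x$, for which
\begin{equation*}
\|x_i-x_{i+1}\|^2=\|x_i\|^2-\|x_{i+1}\|^2,\qquad \sum_{i=1}^{n-1}\|x_i-x_{i+1}\|^2=\|x_1\|^2-\|x_n\|^2,
\end{equation*}
so the telescoping, combined with $\sum_i\|x_i\|^2\geqslant\|x_1\|^2+(n-1)\|x_n\|^2$, yields $(D_n-\varepsilon)\|x_1\|^2\geqslant(D_n+(n-1)\varepsilon)\|x_n\|^2$ directly. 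In effect, the geometrically decaying norms $\|x_i\|$ play the role you assign to the fixed eigenvector weights $\omega_i$, and this is precisely what makes $\Pi$ appear without any product--sum conversion. If you want to salvage your matrix-side approach, I would suggest replacing the eigenvector test data by the vector with entries $p_i=a_{12}a_{23}\cdots a_{i-1,i}$ (the partial products), which mimics the paper's telescoping inside the Gram formalism.
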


By using Theorem~\ref{Th:inequality for fn} one can get a more simple estimate for $f_n(c)$
(a more simple than the estimate given by Theorem~\ref{Th:inequality for fn}).
One can easily check that $a/b\leqslant(a+x)/(b+x)$, 
where $0\leqslant a\leqslant b$ and $x\geqslant 0$.
Setting $a=n-4(n-1)(\sin^2(\pi/(2n)))(1-c)$,
$b=n+4(n-1)^2(\sin^2(\pi/(2n)))(1-c)$ and
$x=4(n-1)(\sin^2(\pi/(2n)))(1-c)$, we get
\begin{align*}
&f_n(c)\leqslant 
\sqrt{\dfrac{n-4(n-1)(\sin^2(\pi/(2n)))(1-c)}{n+4(n-1)^2(\sin^2(\pi/(2n)))(1-c)}}\leqslant\\
&\leqslant\sqrt{\dfrac{n}{n+4n(n-1)(\sin^2(\pi/(2n)))(1-c)}}=
\dfrac{1}{\sqrt{1+4(n-1)(\sin^2(\pi/(2n)))(1-c)}}.
\end{align*}
Using Taylor's theorem with the Lagrange form of the remainder one can easily check that
\begin{equation*}
\dfrac{1}{\sqrt{1+u}}\leqslant 1-\dfrac{1}{2}u+\dfrac{3}{8}u^2
\end{equation*}
for $u\geqslant 0$.
Thus
\begin{equation*}
f_n(c)\leqslant\dfrac{1}{\sqrt{1+4(n-1)(\sin^2(\pi/(2n)))(1-c)}} 
\leqslant 1-a_n(1-c)+b_n(1-c)^2,
\end{equation*}
where $a_n=2(n-1)\sin^2(\pi/(2n))$ and $b_n=6(n-1)^2\sin^4(\pi/(2n))$.
Note that $a_2=1$ and $a_3=1$.

\textbf{Question 1.}
Is it true that $f_n(c)\leqslant 1-a_n(1-c)$ for all $c\in[0,1]$?
Or, at least, for all $c$ which are sufficiently close to $1$?

\subsection{Bounds for $f_n$: lower bounds}

\begin{theorem}\label{Th:lower bound}
For every $n\geqslant 2$ there exists a positive constant $\widetilde{b}_n$ such that
$$
f_n(c)\geqslant 1-a_n(1-c)-\widetilde{b}_n(1-c)^2
$$
for all $c\in[0,1]$.
\end{theorem}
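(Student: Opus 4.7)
The plan is to exhibit, for each $u := 1-c \in [0,1]$, an explicit matrix $A(u) \in \mathcal{H}_n(1+(n-1)c)$ with $\Pi(A(u)) \geq 1 - a_n u - \widetilde{b}_n u^2$; Theorem~\ref{Th:sup over matrices} then upgrades this to the stated lower bound on $f_n(c)$. The ansatz is forced by a first-order analysis around $u=0$: writing $A = J - uM + O(u^2)$ (with $J$ the all-ones matrix), the constraints $a_{ii}=1$ and $0 \leq A \leq (1+(n-1)c)I$ reduce, at leading order in $u$, to $M_{ii}=0$, $M|_{e^\perp} \leq 0$, and $\langle e,Me\rangle \geq n(n-1)$, while $\Pi(A) = 1 - u\sum_i M_{i,i+1} + O(u^2)$. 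The natural rank-one trial $M = \tilde\theta\tilde\theta^T$ with $\tilde\theta \perp e$, $\|\tilde\theta\|=1$ turns this into a Rayleigh quotient for the Neumann Laplacian on the path with $n$ vertices, whose smallest positive eigenvalue is $\mu_1 = 4\sin^2(\pi/(2n))$ with eigenvector $\tilde\theta_j \propto \cos((j-\tfrac12)\pi/n)$; the key identity is $(n-1)\mu_1/2 = a_n$.

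Concretely I take $\tilde\theta_j := \sqrt{2/n}\cos((j-\tfrac12)\pi/n)$, so that $\|\tilde\theta\|=1$, $\tilde\theta_{n+1-j}=-\tilde\theta_j$, and $\sum_i(\tilde\theta_{i+1}-\tilde\theta_i)^2 = \mu_1$. For $\epsilon \geq 0$, let $v_j := (\cos\epsilon\tilde\theta_j,\sin\epsilon\tilde\theta_j)\in\mathbb{R}^2$ and define the Gram matrix $A(\epsilon)_{ij} := \cos(\epsilon(\tilde\theta_i-\tilde\theta_j))$. Then $A(\epsilon)$ is automatically positive semidefinite with unit diagonal, real, and palindromic (by the symmetry of $\tilde\theta$), hence lies in $\mathcal{H}_n'$ once the spectral bound holds. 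Its nonzero spectrum coincides with that of the $2\times 2$ matrix $VV^T$; the symmetry of $\tilde\theta$ forces $VV^T$ to be diagonal with entries $\sum_j\cos^2(\epsilon\tilde\theta_j)$ and $\sum_j\sin^2(\epsilon\tilde\theta_j)$, so $\|A(\epsilon)\| = n - \sum_j\sin^2(\epsilon\tilde\theta_j)$ in the relevant range of $\epsilon$. Using $\sin^2 x \geq x^2 - x^4/3$, $\sum_j\tilde\theta_j^2 = 1$, and the trigonometric identity $S_4 := \sum_j\tilde\theta_j^4 = 3/(2n)$ (via $\cos^4 x = (3 + 4\cos 2x + \cos 4x)/8$), the constraint $\|A(\epsilon)\| \leq 1+(n-1)c$ is implied by the quadratic inequality $\epsilon^2 - S_4\epsilon^4/3 \geq (n-1)u$. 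I solve it to pick
\[
\epsilon(u)^2 := \tfrac{3}{2S_4}\bigl(1-\sqrt{1-4S_4(n-1)u/3}\bigr),
\]
which is well-defined on $u \in [0,u^*]$ with $u^* := n/(2(n-1)) \geq \tfrac12$.

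On $[0,u^*]$, combining $\cos x \geq 1 - x^2/2$ with the Weierstrass-type inequality $\prod(1-y_i) \geq 1 - \sum y_i$ (valid for $y_i \in [0,1]$) yields $\Pi(A(\epsilon(u))) \geq 1 - \tfrac12\mu_1\epsilon(u)^2$, and the elementary bound $1-\sqrt{1-z} \leq z/2 + z^2$ on $[0,1]$ converts this to $\Pi \geq 1 - a_n u - c_n u^2$ for the explicit $c_n = 8(n-1)^2\sin^2(\pi/(2n))/n$. On the complementary interval $u \in [u^*,1]$ the stated lower bound becomes non-positive as soon as $\widetilde{b}_n \geq (1 - a_n u^*)/(u^*)^2$, hence follows trivially from $f_n(c) \geq 0$. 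Taking $\widetilde{b}_n := \max(c_n, (1-a_n u^*)/(u^*)^2)$ finishes the proof.

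The main technical obstacle is the honest bookkeeping of $O(u^2)$ errors: one must use $\sin^2 x \geq x^2 - x^4/3$ in the direction that preserves the spectral constraint after the crude replacement, and simultaneously $\cos x \geq 1 - x^2/2$ in the opposite direction so that the product $\Pi(A)$ retains $a_n$ as leading coefficient for the single parameter $\epsilon(u)$. Conceptually the construction places $n$ unit vectors on a circular arc with angular positions proportional to the slowest non-constant Neumann mode of the path; this simultaneously minimises the Dirichlet energy $\sum_i(\tilde\theta_{i+1}-\tilde\theta_i)^2$ and respects the operator-norm constraint on the resulting Gram matrix, which is why the extremal eigenvalue $\mu_1$ of the Neumann Laplacian appears in the linear coefficient $a_n$.
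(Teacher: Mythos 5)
Your construction coincides with the paper's: the paper takes the one\-/dimensional subspaces of $\mathbb{C}^2$ spanned by $(\cos(\alpha_j\tau),\sin(\alpha_j\tau))^t$ with $(\alpha_j)$ a Fiedler eigenvector of the path Laplacian and invokes Proposition~\ref{prop:f_n and c_D}; the Gram matrix of those spanning vectors is exactly your $A(\epsilon)_{ij}=\cos(\epsilon(\tilde\theta_i-\tilde\theta_j))$, so routing the argument through Theorem~\ref{Th:sup over matrices} is only a change of language, and the appearance of $\lambda_2=4\sin^2(\pi/(2n))$ in the linear coefficient is the same mechanism in both texts. The genuine difference is that the paper works asymptotically ($\tau\to 0+$, all errors $O(\tau^4)$, then enlarges $K$ to cover all of $[0,1]$), whereas you carry explicit constants over the whole interval $[0,u^*]$ --- and that is where the one real gap sits.

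The gap: you assert that $\|A(\epsilon)\|\leqslant 1+(n-1)c$ is \emph{implied} by $\epsilon^2-S_4\epsilon^4/3\geqslant(n-1)u$. That inequality only pushes the eigenvalue $\sum_j\cos^2(\epsilon\tilde\theta_j)=n-\sum_j\sin^2(\epsilon\tilde\theta_j)$ of $VV^\top$ below $n-(n-1)u$; the other eigenvalue $\sum_j\sin^2(\epsilon\tilde\theta_j)$ must also be $\leqslant n-(n-1)u$, and near $u=u^*$ it is not. For $n=3$ and $u=u^*=3/4$ one gets $\epsilon^2=3$ and $\epsilon\tilde\theta=(\sqrt{3/2},0,-\sqrt{3/2})^t$, hence $\sum_j\sin^2(\epsilon\tilde\theta_j)\approx 1.77>1.5=n-(n-1)u^*$, so $A(\epsilon(u^*))\notin\mathcal{H}_3(1+2c)$. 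The patch is cheap: since $\sum_j\sin^2(\epsilon\tilde\theta_j)\leqslant\epsilon^2$ and $\epsilon(u)^2=n\bigl(1-\sqrt{1-2(n-1)u/n}\bigr)\leqslant n-(n-1)u$ precisely when $2(n-1)u/n\leqslant 2(\sqrt{2}-1)$, membership in $\mathcal{H}_n(1+(n-1)c)$ does hold on $[0,u^{**}]$ with $u^{**}:=(\sqrt{2}-1)n/(n-1)\geqslant\sqrt{2}-1$, and your trivial\-/regime argument then covers $[u^{**},1]$ verbatim with a correspondingly enlarged $\widetilde{b}_n$. Two further nits: the identity $S_4=3/(2n)$ fails for $n=2$ (there $\sum_j\cos\bigl(4(j-\tfrac12)\pi/n\bigr)=-2\neq 0$), but $n=2$ is trivial since $f_2(c)=c$; and the Weierstrass step needs each $\epsilon^2(\tilde\theta_i-\tilde\theta_{i+1})^2/2\leqslant 1$, which does hold on the corrected range. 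With these repairs your proof is correct and, in substance, the same as the paper's.
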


Consequently, we have
$$
1-a_n(1-c)-\widetilde{b}_n(1-c)^2\leqslant f_n(c)\leqslant 1-a_n(1-c)+b_n(1-c)^2
$$
for all $c\in[0,1]$.
These inequalities mean that the estimate for $f_n(c)$ 
given by Theorem~\ref{Th:inequality for fn} is optimal for $c\approx 1$,
up to $O((1-c)^2)$, $c\to 1-$.

\section{Proofs}

\subsection{Proof of Proposition~\ref{P:c_D and P_1+...+P_n}}

Set $c=c_D(H_1,...,H_n)$.
Since
$$
c=\sup\{\dfrac{1}{n-1}\dfrac{\sum_{i\neq j}\langle x_i,x_j\rangle}{\sum_{i=1}^n \|x_i\|^2}\,|
x_i\in H_i, i=1,...,n, (x_1,...,x_n)\neq (0,...,0)\},
$$
we conclude that
\begin{align*}
1+(n-1)c&=\sup\{\dfrac{\sum_{i,j}\langle x_i,x_j\rangle}{\sum_{i=1}^n \|x_i\|^2}\,|
x_i\in H_i, i=1,...,n, (x_1,...,x_n)\neq (0,...,0)\}=\\
&=\sup\{\dfrac{\|x_1+...+x_n\|^2}{\|x_1\|^2+...+\|x_n\|^2}\,|
x_i\in H_i, i=1,...,n, (x_1,...,x_n)\neq (0,...,0)\}.
\end{align*}
Consider an operator $S:H_1\oplus H_2\oplus...\oplus H_n\to H$ defined by
$$
S(x_1,...,x_n)^t=x_1+...+x_n,\,x_i\in H_i,\,i=1,2,...,n.
$$
Then $1+(n-1)c=\|S\|^2$.
It is easy to check that $S^*:H\to H_1\oplus...\oplus H_n$ acts as follows:
$S^*x=(P_1 x,...,P_n x)^t$, $x\in H$.
Thus $SS^*=P_1+...+P_n$ and
$$
\|P_1+...+P_n\|=\|SS^*\|=\|S^*\|^2=\|S\|^2=1+(n-1)c.
$$
The proof is complete.

\subsection{Proof of Proposition~\ref{prop:f_n and c_D}}

Define a function $g_n:[0,1]\to\mathbb{R}$ by
$$
g_n(c)=\sup\{\|P_n...P_2 P_1\|\,|c_D(H_1,...,H_n)\leqslant c\},\,c\in[0,1].
$$
We have to prove that $f_n(c)=g_n(c)$ for every $c\in[0,1]$.

First, we will show that $f_n(c)\leqslant g_n(c)$, $c\in[0,1]$.
Consider arbitrary system of subspaces $H_1,...,H_n$ of a Hilbert space $H$ such that
$c_F(H_1,...,H_n)\leqslant c$.
Set $H_0:=H_1\cap...\cap H_n$ and denote by $P_0$ the orthogonal projection onto $H_0$.
Let us prove that $\|P_n...P_2 P_1-P_0\|\leqslant g_n(c)$.
To this end consider the orthogonal decomposition $H=H_0\oplus (H\ominus H_0)=:H_0\oplus H'$.
With respect to this orthogonal decomposition $H_i=H_0\oplus (H_i\ominus H_0)=:H_0\oplus H_i'$,
$i=1,2,...,n$.
Thus
\begin{align*}
c_F(H_1,...,H_n)&:=\sup\{\dfrac{2}{n-1}\dfrac{\sum_{i<j}Re\langle x_i,x_j\rangle}{\|x_1\|^2+\|x_2\|^2+...+\|x_n\|^2}\,|\\
&x_i\in H_i\ominus H_0, i=1,2,...,n, (x_1,x_2,...,x_n)\neq (0,0,...,0)\}=\\
&=\sup\{\dfrac{2}{n-1}\dfrac{\sum_{i<j}Re\langle x_i,x_j\rangle}{\|x_1\|^2+\|x_2\|^2+...+\|x_n\|^2}\,|\\
&x_i\in H_i', i=1,2,...,n, (x_1,x_2,...,x_n)\neq (0,0,...,0)\}=\\
&=c_D(H_1',...,H_n').
\end{align*}
Therefore $c_D(H_1',...,H_n')\leqslant c$.
Further, with respect to the orthogonal decomposition $H=H_0\oplus H'$ we have
$P_i=I\oplus P_i'$, where $P_i'$ is the orthogonal projection onto $H_i'$ in $H'$, 
$i=1,2,...,n$, and $P_0=I\oplus 0$.
Thus $P_n...P_2 P_1-P_0=0\oplus P_n'...P_2' P_1'$ whence
$$
\|P_n...P_2 P_1-P_0\|=\|P_n'...P_2'P_1'\|\leqslant g_n(c),
$$
because $c_D(H_1',...,H_n')\leqslant c$.
It follows that $f_n(c)\leqslant g_n(c)$.

Now we will show that $g_n(c)\leqslant f_n(c)$.
Let us prove this inequality for $c\in[0,1)$.
Consider arbitrary system of subspaces $H_1,...,H_n$ of a Hilbert space $H$ such that
$c_D(H_1,...,H_n)\leqslant c$.
Let us prove that $\|P_n...P_2 P_1\|\leqslant f_n(c)$.
Since $c_D(H_1,...,H_n)<1$, we conclude that $H_1\cap...\cap H_n=\{0\}$.
Indeed, assume that $H_1\cap...\cap H_n\neq\{0\}$.
Take a vector $u\in H_1\cap...\cap H_n$, $u\neq 0$ and set $x_i=u$, $i=1,2,...,n$.
Then
$$
\dfrac{1}{n-1}\dfrac{\sum_{i\neq j}\langle x_i,x_j\rangle}{\|x_1\|^2+\|x_2\|^2+...+\|x_n\|^2}=
\dfrac{n(n-1)\|u\|^2}{n(n-1)\|u\|^2}=1
$$
whence $c_D(H_1,...,H_n)=1$, contradiction.
Therefore $H_1\cap...\cap H_n=\{0\}$.
Thus $c_F(H_1,...,H_n)=c_D(H_1,...,H_n)\leqslant c$ and $P_0=0$.
Hence
$$
\|P_n...P_2 P_1\|=\|P_n...P_2 P_1-P_0\|\leqslant f_n(c).
$$
It follows that $g_n(c)\leqslant f_n(c)$.

Let us show that $g_n(1)\leqslant f_n(1)$.
It is clear that
$$
g_n(1)=\sup\{\|P_n...P_2 P_1\|\,|c_D(H_1,...,H_n)\leqslant 1\}=1
$$
(just take $H_i=H$, $i=1,2,...,n$, then $\|P_n...P_2 P_1\|=\|I\|=1$).
So we have to show that $f_n(1)\geqslant 1$.
To this end we will show that the number $\|P_n...P_2 P_1-P_0\|$ can be arbitrarily close to $1$.
Let $H=\mathbb{C}^2$ be the two-dimensional Hilbert space.
For an angle $\varphi\in(0,\pi/2]$ define two subspaces
$$
M=\{(\cos\varphi,\sin\varphi)^t x|x\in\mathbb{C}\}
$$
and
$$
N=\{(1,0)^t x|x\in\mathbb{C}\}=\{(x,0)^t|x\in\mathbb{C}\}=\mathbb{C}\oplus\{0\}.
$$
Then $M\cap N=\{0\}$ and for the orthogonal projections $P_M$ and $P_N$ onto the subspaces $M$ and $N$, respectively,
we have $\|P_N P_M\|=\|P_N(\cos\varphi,\sin\varphi)^t\|=\cos\varphi$.
Thus for a system of $n$ subspaces $H_1=M$, $H_i=N$, $i=2,3,...,n$ we have
$$
\|P_n...P_2 P_1-P_0\|=\|P_N P_M-0\|=\cos\varphi
$$
can be arbitrarily close to $1$.
Therefore $f_n(1)\geqslant 1$.

So, we proved that $f_n(c)\leqslant g_n(c)$ and $g_n(c)\leqslant f_n(c)$.
It follows that $f_n(c)=g_n(c)$, $c\in[0,1]$.

\subsection{Proof of Theorem~\ref{Th:sup over matrices}}

First, note the maximum
$\max\{|a_{12}a_{23}...a_{n-1,n}|\,| A\in\mathcal{H}_n(1+(n-1)c)\}$ exists, i.e., is attained.
This is a direct consequence of the following two facts:
the function $A\mapsto |a_{12}a_{23}...a_{n-1,n}|$ is continuous and the set $\mathcal{H}_n(1+(n-1)c)$
is compact.

Let us show that
\begin{equation*}
f_n(c)\geqslant\max\{|a_{12}a_{23}...a_{n-1,n}|\,| A\in\mathcal{H}_n(1+(n-1)c)\}.
\end{equation*}
To this end we consider arbitrary matrix $A\in\mathcal{H}_n(1+(n-1)c)$.
We have to show that $f_n(c)\geqslant|a_{12}a_{23}...a_{n-1,n}|$.
Since $A$ is Hermitian and positive semidefinite, 
we conclude that $A=B^*B$ for some $n\times n$ matrix $B$.
Let $v_1,...,v_n$ be the columns of $B$, i.e., $B=(v_1 v_2 ... v_n)$.
We have $a_{ij}=\sum_{k=1}^n \overline{b_{ki}}b_{kj}=\langle v_j,v_i\rangle$.
This means that $A$ is the Gram matrix of the vectors $v_1,...,v_n$.
Since $a_{ii}=1$, we see that $\|v_i\|=1$, $i=1,2,...,n$.
Consider the system of one dimensional subspaces $H_i=\{av_i|a\in\mathbb{C}\}$, $i=1,2,...,n$.
We claim that $c_D(H_1,...,H_n)\leqslant c$ and $\|P_n...P_1\|=|a_{12}a_{23}...a_{n-1,n}|$.
It will follow that $f_n(c)\geqslant|a_{12}a_{23}...a_{n-1,n}|$.
First consider
\begin{equation*}
\|P_nP_{n-1}...P_1\|=\|P_n P_{n-1}...P_1|_{H_1}\|=\|P_n P_{n-1}... P_1v_1\|.
\end{equation*}
Since $P_i x=\langle x,v_i\rangle v_i$, $x\in\mathbb{C}^n$, one can easily check that
\begin{equation*}
P_n P_{n-1}...P_1 v_1=\langle v_1,v_2\rangle \langle v_2,v_3\rangle ...\langle v_{n-1},v_n\rangle v_n.
\end{equation*}
It follows that
\begin{equation*}
\|P_n P_{n-1}...P_1 v_1\|=|\langle v_1,v_2\rangle \langle v_2,v_3\rangle ...\langle v_{n-1},v_n\rangle|=
|a_{21}a_{32}...a_{n,n-1}|=|a_{12}a_{23}...a_{n-1,n}|.
\end{equation*} 
Thus $\|P_n...P_1\|=|a_{12}a_{23}...a_{n-1,n}|$.

Let us show that $c_D(H_1,...,H_n)\leqslant c$.
For arbitrary vectors $x_1=a_1 v_1,...,x_n=a_n v_n$ we have
\begin{align*}
&\sum_{i,j=1}^n\langle x_j,x_i\rangle=
\sum_{i,j=1}^n \langle a_j v_j,a_i v_i\rangle=
\sum_{i,j=1}^n a_{ij}a_j \overline{a_i}=
\sum_{i=1}^n\left(\sum_{j=1}^n a_{ij}a_j\right)\overline{a_i}=\\
&=\langle A(a_1,...,a_n)^t,(a_1,...,a_n)^t\rangle\leqslant (1+(n-1)c)\|(a_1,...,a_n)^t\|^2= (1+(n-1)c)\sum_{i=1}^n\|x_i\|^2.
\end{align*}
It follows that $\sum_{i\neq j}\langle x_j,x_i\rangle\leqslant (n-1)c\sum_{i=1}^n\|x_i\|^2$.
Therefore
\begin{equation*}
c_D(H_1,...,H_n)=\sup\{\dfrac{1}{n-1}\dfrac{\sum_{i\neq j}\langle x_j,x_i\rangle}{\sum_{i=1}^n\|x_i\|^2}|
(x_1,...,x_n)\in H_1\times...\times H_n\setminus\{(0,...,0)\}\}\leqslant c.
\end{equation*}

Let us show that
\begin{equation*}
f_n(c)\leqslant\max\{|a_{12}a_{23}...a_{n-1,n}|| A\in\mathcal{H}_n(1+(n-1)c)\}.
\end{equation*}
Define $K:=\max\{|a_{12}a_{23}...a_{n-1,n}|| A\in\mathcal{H}_n(1+(n-1)c)\}$
and consider arbitrary system of subspaces $H_1,...,H_n$ of a Hilbert space $H$ such that $c_D(H_1,...,H_n)\leqslant c$.
We have to prove that $\|P_n... P_2 P_1\|\leqslant K$.
Let $v_1\in H_1,...,v_n\in H_n$ be arbitrary elements with $\|v_i\|=1$, $i=1,...,n$.
Denote by $G$ the Gram matrix of these elements, i.e., $G=(g_{ij}=\langle v_j,v_i\rangle|i,j=1,...,n)$.
We claim that $G\in\mathcal{H}_n(1+(n-1)c)$.
Indeed, it is clear that $G^*=G\geqslant 0$ and $g_{ii}=\|v_i\|^2=1$, $i=1,...,n$.
It remains to show that $G\leqslant (1+(n-1)c)I$.
For arbitrary scalars $a_1,...,a_n$ we have
\begin{align*}
&\langle G(a_1,...,a_n)^t,(a_1,...,a_n)^t\rangle=\sum_{i,j=1}^n g_{ij}a_j\overline{a_i}=\\
&=\sum_{i=1}^n|a_i|^2+\sum_{i\neq j}\langle v_j,v_i\rangle a_j\overline{a_i}=
\sum_{i=1}^n|a_i|^2+\sum_{i\neq j}\langle a_j v_j,a_i v_i\rangle\leqslant\\
&\leqslant \sum_{i=1}^n|a_i|^2+(n-1)c\sum_{i=1}^n\|a_i v_i\|^2=(1+(n-1)c)\sum_{i=1}^n|a_i|^2.
\end{align*}
It follows that $G\leqslant (1+(n-1)c)I$.
(It is worth mentioning that this follows also from \cite[Proposition 3.4]{BGM11} formulated for
the nonreduced configuration constant and \cite[Proposition 3.6(f)]{BGM11}.)
Since $G\in\mathcal{H}_n(1+(n-1)c)$, we conclude that $|g_{12}g_{23}...g_{n-1,n}|\leqslant K$, i.e., 
$|\langle v_1,v_2\rangle \langle v_2,v_3\rangle...\langle v_{n-1},v_n\rangle|\leqslant K$.
It follows that for \emph{arbitrary} elements $u_1\in H_1,...,u_n\in H_n$ we have
\begin{equation}\label{ineq:product}
|\langle u_1,u_2\rangle \langle u_2,u_3\rangle...\langle u_{n-1},u_n\rangle|\leqslant
K\|u_1\|\|u_2\|^2...\|u_{n-1}\|^2\|u_n\|.
\end{equation}
Now consider arbitrary $x\in H$ and set $u_i:=P_i P_{i-1}...P_1 x$, $i=1,...,n$.
Then
\begin{equation*}
\langle u_i,u_{i+1}\rangle=\langle u_i, P_{i+1}u_i\rangle=\|P_{i+1}u_i\|^2=\|u_{i+1}\|^2.
\end{equation*}
Thus by \eqref{ineq:product} we get
\begin{equation*}
\|u_2\|^2\|u_3\|^2...\|u_{n-1}\|^2\|u_n\|^2\leqslant K\|u_1\|\|u_2\|^2...\|u_{n-1}\|^2\|u_n\|,
\end{equation*}
that is, $\|u_n\|\leqslant K\|u_1\|$.
Since $u_1=P_1 x$ and $u_n=P_n P_{n-1}...P_1 x$, we see that
\begin{equation*}
\|P_n... P_2 P_1 x\|\leqslant K\|P_1 x\|\leqslant K\|x\|.
\end{equation*}
Therefore $\|P_n...P_2 P_1\|\leqslant K$.
This completes the proof.

\subsection{Proof of Lemma~\ref{L:lemma good matrix}}

First, note that the set $\mathcal{H}_n(t)$ has the following properties:
\begin{enumerate}
\item
if $A\in\mathcal{H}_n(t)$ and $U$ is a diagonal unitary matrix, i.e.,
$U=diag(u_1,...,u_n)$, where $u_1,...,u_n$ are scalars with $|u_i|=1$, $i=1,2,...,n$,
then $U^*AU\in\mathcal{H}_n(t)$;
\item
if $A\in\mathcal{H}_n(t)$, then $A^\top\in\mathcal{H}_n(t)$.
Here $(A^\top)_{ij}=a_{ji}$, $i,j=1,2,...,n$;
\item
if $A\in\mathcal{H}_n(t)$, then $\overleftarrow{A}\in\mathcal{H}_n(t)$.
Here $(\overleftarrow{A})_{ij}=a_{n+1-i,n+1-j}$, $i,j=1,2,...,n$.
\item
the set $\mathcal{H}_n(t)$ is convex.
\end{enumerate}

Now we are ready to prove the needed assertion.
Let $A\in\mathcal{H}_n(t)$.
For a diagonal unitary matrix $U=diag(u_1,u_2,...,u_n)$ define $B:=U^*AU$.
Then $B\in\mathcal{H}_n(t)$.
Moreover, since $b_{i,i+1}=a_{i,i+1}\overline{u_i}u_{i+1}$
one can choose scalars $u_1,...,u_n$ so that $b_{i,i+1}=|a_{i,i+1}|$
for $i=1,2,...,n-1$.
Then $\Pi(B)=\Pi(A)$.

Further, consider the matrix $B^\top$ and set $C:=1/2(B+B^\top)$.
Then $C\in\mathcal{H}_n(t)$.
We have $c_{ij}=1/2(b_{ij}+b_{ji})=Re(b_{ij})\in\mathbb{R}$
and $c_{i,i+1}=b_{i,i+1}\geqslant 0$.
Therefore $\Pi(C)=\Pi(B)$.

Finally, consider the matrix $\overleftarrow{C}$ and set $D:=1/2(C+\overleftarrow{C})$.
Then $D\in\mathcal{H}_n(t)$.
The matrix $D$ has the following properties:
\begin{enumerate}
\item
$d_{ij}=1/2(c_{ij}+c_{n+1-i,n+1-j})\in\mathbb{R}$ for all $i,j$ and 
$d_{i,i+1}=1/2(c_{i,i+1}+c_{n+1-i,n-i})=1/2(c_{i,i+1}+c_{n-i,n-i+1})\geqslant 0$ for $i=1,2,...,n-1$;
\item
$d_{n+1-i,n+1-j}=d_{ij}$ for all $i,j=1,2,...,n$;
\item
since $d_{i,i+1}=1/2(c_{i,i+1}+c_{n-i,n-i+1})\geqslant\sqrt{c_{i,i+1}c_{n-i,n-i+1}}$ for $i=1,2,...,n-1$,
we conclude that 
$$
\Pi(D)=d_{12}d_{23}...d_{n-1,n}\geqslant c_{12}c_{23}...c_{n-1,n}=\Pi(C).
$$
\end{enumerate}
Thus $D$ is a needed matrix.

\subsection{Proof of Theorem~\ref{Th:f3}}

To find $f_3(c)$ one can consider matrices of the form
\begin{equation*}
A=\begin{pmatrix}
1&x&y\\
x&1&x\\
y&x&1
\end{pmatrix},
\end{equation*}
where $x\geqslant 0$ and $y\in\mathbb{R}$.
We have to maximize $x^2$ under the condition 
$0\leqslant A\leqslant(1+2c)I$.

Consider the condition $A\geqslant 0$.
It is well-known that a Hermitian matrix is positive semidefinite if and only if
every principal minor of the matrix (including its determinant) is nonnegative.
(Recall that a principal minor is the determinant of a principal submatrix;
a principal submatrix is a square submatrix obtained by removing certain
rows and columns with the same index sets.)
Using this criterion one can easily check that $A\geqslant 0$ if and only if
\begin{equation*}
\begin{cases}
0\leqslant x\leqslant 1,\\
|y|\leqslant 1,\\
x^2\leqslant (1+y)/2.
\end{cases}
\end{equation*}

Consider the condition $A\leqslant (1+2c)I$
$\Leftrightarrow$ $(1+2c)I-A\geqslant 0$.
Now one can easily check that $A\leqslant (1+2c)I$ if and only if
\begin{equation*}
\begin{cases}
0\leqslant x \leqslant 2c,\\
|y|\leqslant 2c,\\
x^2\leqslant c(2c-y).
\end{cases}
\end{equation*}
Hence, $0\leqslant A\leqslant (1+2c)I$ if and only if
\begin{equation*}
\begin{cases}
0\leqslant x\leqslant\min\{1,2c\},\\
|y|\leqslant\min\{1,2c\},\\
x^2\leqslant\min\{(1+y)/2, c(2c-y)\}.
\end{cases}
\end{equation*}
We have to maximize $x^2$ under these conditions.

Define two linear functions $\varphi(y)=(1+y)/2$ and $\psi(y)=c(2c-y)$.
It is clear that $\varphi$ is increasing and $\psi$ is nonincreasing.
Consider the equation $\varphi(y)=\psi(y)$.
The unique solution is $y=2c-1$.
Therefore
\begin{equation*}
\min\{(1+y)/2, c(2c-y)\}=
\begin{cases}
(1+y)/2,\quad\text{if}\quad y\leqslant 2c-1,\\
c(2c-y),\quad\text{if}\quad y\geqslant 2c-1.
\end{cases}
\end{equation*}
This minimum attains its maximum value $c$ at the point $y=2c-1$.
Thus $x^2\leqslant c$ and $x\leqslant\sqrt{c}$.
Let us check for which $c\in[0,1]$ the values $x=\sqrt{c}$ and $y=2c-1$ are permissible.
First consider the inequality $|y|\leqslant\min\{1,2c\}$.
It is clear that $-1\leqslant 2c-1\leqslant 1$ and $2c-1\leqslant 2c$.
However, the inequality $2c-1\geqslant -2c$ holds only for $c\geqslant 1/4$.
For such $c$ we have $\sqrt{c}\leqslant 1$ and $\sqrt{c}\leqslant 2c$.
Conclusion: for $c\in[1/4,1]$ the optimal values $x=\sqrt{c}$, $y=2c-1$, the optimal matrix is equal to
\begin{equation*}
\begin{pmatrix}
1        &\sqrt{c}  &  2c-1\\
\sqrt{c} &1         &\sqrt{c}\\
2c-1     &\sqrt{c}  & 1
\end{pmatrix}
\end{equation*}
and $f_3(c)=c$.

Consider the case $c\in[0,1/4)$.
Then $2c-1<-2c$ and hence the conditions for $x$ and $y$ can be rewritten as
\begin{equation*}
\begin{cases}
0\leqslant x\leqslant 2c,\\
-2c\leqslant y\leqslant 2c,\\
x^2\leqslant c(2c-y).
\end{cases}
\end{equation*}
Now it is easy to see that the optimal values of $x$ and $y$ are $x=2c$ and $y=-2c$.
Therefore the optimal matrix is equal to
\begin{equation*}
\begin{pmatrix}
1   &  2c  &-2c\\
2c  &  1   & 2c\\
-2c &  2c  & 1
\end{pmatrix}
\end{equation*}
and $f_3(c)=4c^2$.

\subsection{Proof of Theorem~\ref{Th:f_n}}

Consider an arbitrary matrix $A\in\mathcal{H}_n(1+(n-1)c)$.
Since $A\leqslant (1+(n-1)c)I$, we conclude that the matrix $(1+(n-1)c)I-A$
is positive semidefinite.
It follows that the determinant of every $2\times 2$ submatrix
\begin{equation*}
\begin{pmatrix}
(n-1)c & -a_{ij}\\
-a_{ji}& (n-1)c
\end{pmatrix}
\end{equation*}
is nonnegative, i.e., $(n-1)^2 c^2-|a_{ij}|^2\geqslant 0$, $|a_{ij}|\leqslant (n-1)c$.
Therefore $|a_{12}a_{23}...a_{n-1,n}|\leqslant (n-1)^{n-1}c^{n-1}$.

On the other hand, consider the matrix $J$ where each entry is equal to $1$, i.e.,
\begin{equation*}
J=\begin{pmatrix}
1     &\ldots  &1    \\
\vdots&\ddots&\vdots \\
1     &\ldots & 1
\end{pmatrix}.
\end{equation*}
It is easily seen that $J$ is positive semidefinite and the largest eigenvalue of $J$ equals $n$.
Thus $0\leqslant J\leqslant nI$,
$-nI\leqslant -J\leqslant 0$,
$-(n-1)I\leqslant I-J\leqslant I$,
$-(n-1)^2 c I\leqslant (n-1)c(I-J)\leqslant (n-1)c I$ and
\begin{equation*}
(1-(n-1)^2 c)I\leqslant I+(n-1)c(I-J)\leqslant (1+(n-1)c)I.
\end{equation*}
Set $M:=I+(n-1)c(I-J)$.
Since $c\in[0,1/(n-1)^2]$ and
\begin{equation*}
m_{ij}=\begin{cases}
1,\quad\text{if}\quad i=j,\\
-(n-1)c,\quad\text{if}\quad i\neq j,
\end{cases}
\end{equation*}
we see that $M\in\mathcal{H}_n(1+(n-1)c)$ and $\Pi(M)=(n-1)^{n-1}c^{n-1}$.
Therefore $f_n(c)=(n-1)^{n-1}c^{n-1}$.

Finally, define $U:=diag(-1,1,-1,1,...)$ and consider the matrix $A:=U^*MU$.
Since
$$
a_{ij}=\begin{cases}
1,\quad\text{if}\quad i=j,\\
(-1)^{i+j+1}(n-1)c,\quad\text{if}\quad i\neq j,
\end{cases}
$$
we conclude that $A\in\mathcal{H}_n'(1+(n-1)c)$ and $\Pi(A)=(n-1)^{n-1}c^{n-1}$.
Thus $A$ is optimal.

\subsection{Proof of Theorem~\ref{Th:convexity_fn}}

Let $c_1,c_2\in[0,1]$ and $\lambda\in(0,1)$.
We have to show that
$$
(f_n(\lambda c_1+(1-\lambda)c_2))^{1/(n-1)}\geqslant\lambda(f_n(c_1))^{1/(n-1)}+(1-\lambda)(f_n(c_2))^{1/(n-1)}.
$$
Let $A\in\mathcal{H}_n(1+(n-1)c_1)$ be such that $a_{i,i+1}\geqslant 0$ for $i=1,2,...,n-1$ and $\Pi(A)=f_n(c_1)$.
Let $B\in\mathcal{H}_n(1+(n-1)c_2)$ be such that $b_{i,i+1}\geqslant 0$ for $i=1,2,...,n-1$ and $\Pi(B)=f_n(c_2)$.
Consider the matrix $\lambda A+(1-\lambda)B$.
It is clear that $\lambda A+(1-\lambda)B\in\mathcal{H}_n(1+(n-1)(\lambda c_1+(1-\lambda)c_2))$.
Thus
$$
f_n(\lambda c_1+(1-\lambda)c_2)\geqslant\Pi(\lambda A+(1-\lambda)B)
$$
whence
$$
(f_n(\lambda c_1+(1-\lambda)c_2))^{1/(n-1)}\geqslant(\Pi(\lambda A+(1-\lambda)B))^{1/(n-1)}.
$$
Now we will use the inequality
$$
\sqrt[m]{(s_1+t_1)...(s_m+t_m)}\geqslant\sqrt[m]{s_1 s_2... s_m}+\sqrt[m]{t_1 t_2...t_m},
$$
where $m$ is a natural number and numbers $s_1,...,s_m,t_1,...,t_m$ are nonnegative.
We have
\begin{align*}
&(f_n(\lambda c_1+(1-\lambda)c_2))^{1/(n-1)}\geqslant(\Pi(\lambda A+(1-\lambda)B))^{1/(n-1)}=\\
&=\sqrt[n-1]{(\lambda a_{12}+(1-\lambda)b_{12})...(\lambda a_{n-1,n}+(1-\lambda)b_{n-1,n})}\geqslant\\
&\geqslant\sqrt[n-1]{(\lambda a_{12})...(\lambda a_{n-1,n})}+\sqrt[n-1]{((1-\lambda) b_{12})...((1-\lambda) b_{n-1,n})}=\\
&=\lambda(\Pi(A))^{1/(n-1)}+(1-\lambda)(\Pi(B))^{1/(n-1)}=
\lambda(f_n(c_1))^{1/(n-1)}+(1-\lambda)(f_n(c_2))^{1/(n-1)}.
\end{align*}
The proof is completed.

\subsection{Proof of Corollary~\ref{C:continuity of f_n}}

Define the function $g_n:=f_n^{1/(n-1)}$.
Let us prove that $g_n$ is continuous on $[0,1]$.
It will follow that $f_n=g_n^{n-1}$ is also continuous on $[0,1]$.

We will use the following well-known fact:
if a function $\varphi:(a,b)\to\mathbb{R}$ is convex on $(a,b)$,
then $\varphi$ is continuous on $(a,b)$.
Since $g_n$ is concave on $[0,1]$ (by Theorem~\ref{Th:convexity_fn}),
we conclude that $g_n$ is continuous on $(0,1)$.
Theorem~\ref{Th:f_n} implies that $g_n(c)=(n-1)c$ for $c\in[0,1/(n-1)^2]$.
Thus $g_n$ is continuous at the point $0$.
Let us show that $g_n$ is continuous at the point $1$.
We have $g_n(1)=(f_n(1))^{1/(n-1)}=1$ (Proposition~\ref{prop:f_n and c_D} implies
that $f_n(1)=1$) and $g_n(0)=0$.
Since $g_n$ is concave on $[0,1]$, we conclude that 
$g_n(c)\geqslant c$ for all $c\in [0,1]$.
Since $g_n$ is non-decreasing on $[0,1]$, we conclude that
$g_n(c)\leqslant 1$ for all $c\in[0,1]$.
Thus $c\leqslant g_n(c)\leqslant 1$ for $c\in[0,1]$.
It follows that $\lim_{c\to 1-}g_n(c)=1=g_n(1)$.
Therefore $g_n$ is continuous at the point $1$.

We proved that the function $g_n$ is continuous at every point of the segment $[0,1]$.
Thus $g_n$ is continuous on $[0,1]$.

\subsection{Proof of Theorem~\ref{Th:functional equation}}

Fix $c\in[1/(n-1)^2,1]$.
Consider arbitrary matrix $A\in\mathcal{H}_n(1+(n-1)c)$.
Then
$0\leqslant A\leqslant (1+(n-1)c)I$,
$0\leqslant (1+(n-1)c)I-A\leqslant (1+(n-1)c)I$ and
$$
0\leqslant\dfrac{(1+(n-1)c)I-A}{(n-1)c}\leqslant
\left(1+\dfrac{1}{(n-1)c}\right)I=
\left(1+\dfrac{n-1}{(n-1)^2 c}\right)I.
$$
Define 
$$
B:=\dfrac{(1+(n-1)c)I-A}{(n-1)c},
$$ 
then
$b_{ii}=1$, $i=1,2,...,n$ and
$b_{ij}=-a_{ij}/((n-1)c)$ for $i\neq j$.
It follows that $B\in\mathcal{H}_n(1+(n-1)/((n-1)^2 c))$ and
$\Pi(B)=\Pi(A)/((n-1)^{n-1}c^{n-1})$.
Since the mapping $A\mapsto B$ from 
$\mathcal{H}_n(1+(n-1)c)$ to $\mathcal{H}_n(1+(n-1)/((n-1)^2 c))$
is one-to-one and onto, we conclude that
\begin{align*}
f_n\left(\dfrac{1}{(n-1)^2 c}\right)&=
\max\{\Pi(B)|B\in\mathcal{H}_n(1+\dfrac{n-1}{(n-1)^2 c})\}=\\
&=\max\{\dfrac{\Pi(A)}{(n-1)^{n-1}c^{n-1}}|A\in\mathcal{H}_n(1+(n-1)c)\}=
\dfrac{f_n(c)}{(n-1)^{n-1}c^{n-1}}.
\end{align*}

\subsection{Proof of Proposition~\ref{P:optimal matrix}}

First assume that
$$
\dfrac{b_{12}}{a_{12}}+\dfrac{b_{23}}{a_{23}}+...+\dfrac{b_{n-1,n}}{a_{n-1,n}}\leqslant n-1
$$
for arbitrary matrix $B\in\mathcal{H}_n(1+(n-1)c)$ with $b_{i,i+1}\geqslant 0$ for $i=1,2,...,n-1$.
Then
$$
n-1\geqslant\dfrac{b_{12}}{a_{12}}+\dfrac{b_{23}}{a_{23}}+...+\dfrac{b_{n-1,n}}{a_{n-1,n}}\geqslant
(n-1)\sqrt[n-1]{\dfrac{b_{12}...b_{n-1,n}}{a_{12}...a_{n-1,n}}}.
$$
It follows that
$$
\Pi(B)=b_{12}b_{23}...b_{n-1,n}\leqslant a_{12}a_{23}...a_{n-1,n}=\Pi(A).
$$
and therefore $f_n(c)=\Pi(A)$.

Now assume that a matrix $A$ is optimal, i.e., $\Pi(A)=f_n(c)$.
Consider arbitrary matrix $B\in\mathcal{H}_n(1+(n-1)c)$ with $b_{i,i+1}\geqslant 0$ for $i=1,2,...,n-1$.
For arbitrary number $\alpha\in[0,1]$ the matrix $(1-\alpha)A+\alpha B$ belongs to $\mathcal{H}_n(1+(n-1)c)$.
Define the function
$$
\varphi(\alpha):=\Pi((1-\alpha)A+\alpha B)=((1-\alpha)a_{12}+\alpha b_{12})...((1-\alpha)a_{n-1,n}+\alpha b_{n-1,n}),
\,\alpha\in[0,1].
$$
Since $A$ is optimal, we conclude that $\varphi(\alpha)\leqslant\Pi(A)=\varphi(0)$ for $\alpha\in[0,1]$.
It follows that $\varphi'(0)\leqslant 0$, i.e.,
$$
\sum_{i=1}^{n-1}\dfrac{a_{12}a_{23}...a_{n-1,n}}{a_{i,i+1}}(b_{i,i+1}-a_{i,i+1})\leqslant 0.
$$
Thus
$$
\dfrac{b_{12}}{a_{12}}+\dfrac{b_{23}}{a_{23}}+...+\dfrac{b_{n-1,n}}{a_{n-1,n}}\leqslant n-1.
$$

\subsection{Proof of Theorem~\ref{Th:inequality for fn}}

For $n\geqslant 2$ set $D_n:=n/(4\sin^2(\pi/(2n)))$.

\begin{lemma}\label{L:difference R}
For arbitrary real numbers $a_1,...,a_n$ the following inequality holds:
\begin{equation}\label{eq:difference R}
\sum_{i<j}(a_i-a_j)^2\leqslant D_n\sum_{i=1}^{n-1}(a_i-a_{i+1})^2.
\end{equation}
\end{lemma}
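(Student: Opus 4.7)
The plan is to recognize inequality \eqref{eq:difference R} as a discrete Poincaré inequality on the path graph $P_n$, where the right-hand side is (up to normalization) the Dirichlet form of the graph Laplacian with Neumann boundary conditions. The key algebraic identity that initiates the reduction is
$$
\sum_{i<j}(a_i-a_j)^2 = n\sum_{i=1}^n a_i^2 - \Bigl(\sum_{i=1}^n a_i\Bigr)^2,
$$
which is easy to verify by expanding. Both sides of \eqref{eq:difference R} are invariant under the translation $a_i\mapsto a_i+t$, so I may assume without loss of generality that $\sum_{i=1}^n a_i=0$. Under this normalization the left-hand side equals $n\|a\|^2$, and \eqref{eq:difference R} reduces to the Poincaré-type estimate
$$
\|a\|^2 \leqslant \frac{1}{4\sin^2(\pi/(2n))}\sum_{i=1}^{n-1}(a_i-a_{i+1})^2
\qquad\text{when}\qquad \sum_{i=1}^n a_i = 0.
$$

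Next I would identify $\sum_{i=1}^{n-1}(a_i-a_{i+1})^2 = \langle L a,a\rangle$, where $L$ is the $n\times n$ tridiagonal Neumann Laplacian of the path: $(La)_1=a_1-a_2$, $(La)_n=a_n-a_{n-1}$, and $(La)_i=2a_i-a_{i-1}-a_{i+1}$ for $2\leqslant i\leqslant n-1$. The operator $L$ is real symmetric and positive semidefinite; its eigenvalues, with the corresponding eigenvectors $v_k(j)=\cos\bigl((j-\tfrac12)k\pi/n\bigr)$ for $k=0,1,\ldots,n-1$, are
$$
\lambda_k = 4\sin^2\!\bigl(k\pi/(2n)\bigr),\quad k=0,1,\ldots,n-1.
$$
The smallest eigenvalue is $\lambda_0=0$ with eigenvector the constant vector $(1,\ldots,1)^t$, and the next one is $\lambda_1=4\sin^2(\pi/(2n))$.

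I would verify the eigenvalue formula by a direct check: for interior indices the identity $\cos\alpha+\cos\beta=2\cos\tfrac{\alpha+\beta}{2}\cos\tfrac{\alpha-\beta}{2}$ immediately gives $(Lv_k)_i=2\bigl(1-\cos(k\pi/n)\bigr)v_k(i)=\lambda_k v_k(i)$, and for the boundary indices $i=1,n$ the same product-to-sum identities do the job. Since $\sum_i a_i=0$ means $a$ is orthogonal to the $\lambda_0=0$ eigenvector, the spectral theorem yields $\langle L a,a\rangle \geqslant \lambda_1\|a\|^2 = 4\sin^2(\pi/(2n))\|a\|^2$, which is exactly the required bound. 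Multiplying by $n$ and recalling $D_n=n/(4\sin^2(\pi/(2n)))$ recovers \eqref{eq:difference R}.

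The main obstacle, such as it is, lies in selecting the correct boundary conditions and the correct eigenbasis: the right-hand side of \eqref{eq:difference R} is \emph{not} the Dirichlet form (which would annihilate $a=$const only by imposing extra boundary constraints), but the Neumann form, for which the explicit eigenvectors $v_k(j)=\cos\bigl((j-\tfrac12)k\pi/n\bigr)$ are less standard than the Dirichlet sine eigenvectors and require the shifted argument $(j-\tfrac12)$. Once this is pinpointed, the remainder of the argument is a routine spectral estimate, and the constant $D_n$ is seen to be sharp (achieved by $a_j=\cos((j-\tfrac12)\pi/n)$, whose entries automatically sum to zero by orthogonality to the constant eigenvector).
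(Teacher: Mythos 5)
Your proof is correct and follows essentially the same route as the paper: translate so that $\sum_i a_i=0$, rewrite the left side as $n\|a\|^2$, recognize the right side as the quadratic form of the path-graph Laplacian $L$, and invoke its second-smallest eigenvalue $\lambda_2=4\sin^2(\pi/(2n))$ on the orthogonal complement of the constant vector. The only difference is that you verify the spectrum of $L$ explicitly via the cosine eigenvectors, whereas the paper simply cites $\lambda_2=4\sin^2(\pi/(2n))$ as well known.
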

\begin{proof}
Consider the inequality
\begin{equation}\label{ineq:D}
\sum_{i<j}(a_i-a_j)^2\leqslant D\sum_{i=1}^{n-1}(a_i-a_{i+1})^2,
\end{equation}
where $D>0$ and $a_1,...,a_n\in\mathbb{R}$.
We have to show that this inequality is valid for $D=D_n$ and arbitrary $a_1,...,a_n\in\mathbb{R}$.
Inequality~\eqref{ineq:D} does not change after substitution $a_i\to a_i+b$, $i=1,2,...,n$, where $b\in\mathbb{R}$.
Therefore without loss of generality we can and will assume that $a_1+...+a_n=0$.
Then the left side of inequality~\eqref{ineq:D} is equal to
\begin{align*}
&\sum_{i<j}(a_i-a_j)^2=\sum_{i<j}(a_i^2+a_j^2-2a_i a_j)=(n-1)(a_1^2+...+a_n^2)-2\sum_{i<j}a_i a_j=\\
&=n(a_1^2+...+a_n^2)-(a_1+...+a_n)^2=n(a_1^2+...+a_n^2).
\end{align*}
Thus inequality~\eqref{ineq:D} is equivalent to the inequality
$$
D\sum_{i=1}^{n-1}(a_i-a_{i+1})^2\geqslant n(a_1^2+...+a_n^2)
$$
which is equivalent to
\begin{equation}\label{ineq:n/D}
\sum_{i=1}^{n-1}(a_i-a_{i+1})^2\geqslant\dfrac{n}{D}(a_1^2+...+a_n^2).
\end{equation}
Define the matrix
$$
L=\begin{pmatrix}
1 & -1 & 0 & \cdots & 0 & 0\\
-1 & 2 & -1 & 0 &\ddots & 0\\
0 &-1 & 2 & -1& \ddots&\vdots\\
\vdots& 0 & -1& \ddots& \ddots & 0\\
0&\ddots& \ddots & \ddots &2&-1\\
0&0&\cdots &0&-1 & 1 
\end{pmatrix}
$$
corresponding to the quadratic form $\sum_{i=1}^{n-1}(a_i-a_{i+1})^2$.
The matrix $L$ is the Laplacian matrix of the graph $P_n$ with vertices $1,2,...,n$ and
edges $\{1,2\}, \{2,3\},..., \{n-1,n\}$ (the path of length $n-1$).
Let $\lambda_1\leqslant\lambda_2\leqslant...\leqslant\lambda_n$ be the spectrum of $L$.
It is clear that the eigenvalue $\lambda_1=0$ (with a corresponding eigenvector $(1,1,...,1)^t$)
and the multiplicity of $\lambda_1$ is equal to $1$.
Inequality~\eqref{ineq:n/D} can be written as $\langle La,a\rangle\geqslant(n/D)\|a\|^2$, where
a vector $a=(a_1,...,a_n)^t$ is orthogonal to the vector $(1,1,...,1)^t$.
Therefore this inequality will be valid if $n/D=\lambda_2$, i.e., if $D=n/\lambda_2$.
It is well-known that $\lambda_2=4\sin^2(\pi/(2n))$.
Thus inequality~\eqref{ineq:D} will be valid with $D=n/(4\sin^2(\pi/(2n)))=D_n$.
\end{proof}

\begin{lemma}\label{L:difference H}
For arbitrary vectors $v_1,...,v_n\in H$ the following inequality holds:
\begin{equation*}
\sum_{i<j}\|v_i-v_j\|^2\leqslant D_n\sum_{i=1}^{n-1}\|v_i-v_{i+1}\|^2.
\end{equation*}
\end{lemma}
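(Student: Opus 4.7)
The approach is to reduce the Hilbert space inequality to the scalar one provided by Lemma~\ref{L:difference R}. The key observation is that both sides of the claim depend only on inner products (and hence on coordinates) of the vectors $v_1,\ldots,v_n$, and that the inequality of Lemma~\ref{L:difference R} is homogeneous quadratic in the $a_i$, so it tensorizes in a standard way.

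First I would reduce to the finite-dimensional case: the vectors $v_1,\ldots,v_n$ span a finite-dimensional subspace of $H$, so I can choose an orthonormal basis $e_1,\ldots,e_N$ of that span and write $v_i=\sum_{k=1}^{N} c_{i,k} e_k$ with $c_{i,k}\in\mathbb{C}$. Then
\begin{equation*}
\|v_i-v_j\|^2 \;=\; \sum_{k=1}^N |c_{i,k}-c_{j,k}|^2 \;=\; \sum_{k=1}^N\bigl[(\mathrm{Re}\,c_{i,k}-\mathrm{Re}\,c_{j,k})^2 + (\mathrm{Im}\,c_{i,k}-\mathrm{Im}\,c_{j,k})^2\bigr],
\end{equation*}
so both sides of the desired inequality split as double sums indexed by the basis index $k$ and by real/imaginary parts. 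For each such coordinate slice, the tuple of real numbers $(\mathrm{Re}\,c_{1,k},\ldots,\mathrm{Re}\,c_{n,k})$ and $(\mathrm{Im}\,c_{1,k},\ldots,\mathrm{Im}\,c_{n,k})$ already satisfies the scalar inequality \eqref{eq:difference R} of Lemma~\ref{L:difference R}. Summing these scalar inequalities over $k$ and over real/imaginary parts yields the claim.

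An equivalent, slightly more conceptual way to organize the same argument is to interpret Lemma~\ref{L:difference R} as saying that the real symmetric matrix $Q:=D_n L - (nI-J)$ (where $L$ is the path Laplacian used in the proof of Lemma~\ref{L:difference R} and $J$ is the all-ones matrix) is positive semidefinite, and then to factor $Q=R^\top R$. For any vectors $v_1,\ldots,v_n\in H$,
\begin{equation*}
\sum_{i,j=1}^n q_{ij}\langle v_i,v_j\rangle \;=\; \sum_{k}\Bigl\|\sum_{i=1}^n R_{ki}v_i\Bigr\|^2 \;\geqslant\; 0,
\end{equation*}
and this is exactly the stated inequality after rewriting $\sum_{i<j}\|v_i-v_j\|^2$ and $\sum_{i=1}^{n-1}\|v_i-v_{i+1}\|^2$ as the quadratic forms associated with $nI-J$ and $L$ evaluated at $(v_1,\ldots,v_n)$.

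There is no genuine obstacle here: Lemma~\ref{L:difference R} does all the work. The only thing to be careful about is that the inequality must be applied separately to real and imaginary parts (so that one stays in the real-scalar setting of Lemma~\ref{L:difference R}), but since both sides are quadratic in the $v_i$'s this splitting is immediate.
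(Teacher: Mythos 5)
Your proof is correct, but it is genuinely different from the one in the paper. You reduce the vector inequality to the scalar one of Lemma~\ref{L:difference R} by coordinatizing: pick an orthonormal basis of the span of $v_1,\dots,v_n$, split into real and imaginary parts, apply the scalar inequality to each coordinate slice, and sum; equivalently, you observe that Lemma~\ref{L:difference R} says the real symmetric matrix $D_nL-(nI-J)$ is positive semidefinite and that a real PSD quadratic-form inequality automatically extends to Hilbert-space-valued arguments. Both versions of your argument are sound (the identification $\sum_{i<j}\|v_i-v_j\|^2=n\sum_i\|v_i\|^2-\|\sum_i v_i\|^2=\sum_{i,j}(nI-J)_{ij}\langle v_i,v_j\rangle$ checks out, and the factorization $Q=R^\top R$ with real $R$ handles the complex case correctly). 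The paper instead sets $a_1:=0$ and $a_i:=\|v_1-v_2\|+\dots+\|v_{i-1}-v_i\|$, applies Lemma~\ref{L:difference R} to these scalars, and then uses the triangle inequality $\|v_i-v_j\|\leqslant\|v_i-v_{i+1}\|+\dots+\|v_{j-1}-v_j\|=|a_i-a_j|$ together with the exact equality $|a_i-a_{i+1}|=\|v_i-v_{i+1}\|$. The trade-off: the paper's argument uses only the triangle inequality, so it would prove the same inequality with the same constant in an arbitrary metric space; your tensorization argument is specific to inner-product spaces but is arguably more transparent here, since it makes explicit that the lemma is nothing but the positive semidefiniteness of $D_nL-(nI-J)$ transferred from $\mathbb{R}$ to $H$.
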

\begin{proof}
Set $a_1:=0$ and $a_i:=\|v_1-v_2\|+...+\|v_{i-1}-v_i\|$ for $i\geqslant 2$.
For $i<j$ we have $a_i-a_j=-(\|v_i-v_{i+1}\|+...+\|v_{j-1}-v_j\|)$.
Using Lemma~\ref{L:difference R} we get
\begin{equation*}
\sum_{i<j}(\|v_i-v_{i+1}\|+...+\|v_{j-1}-v_j\|)^2\leqslant D_n\sum_{i=1}^{n-1}\|v_i-v_{i+1}\|^2.
\end{equation*}
It follows that
\begin{equation*}
\sum_{i<j}\|v_i-v_j\|^2\leqslant 
\sum_{i<j}(\|v_i-v_{i+1}\|+...+\|v_{j-1}-v_j\|)^2\leqslant D_n\sum_{i=1}^{n-1}\|v_i-v_{i+1}\|^2.
\end{equation*}
\end{proof}

Now we are ready to prove Theorem~\ref{Th:inequality for fn}.
The proof of Theorem~\ref{Th:inequality for fn} is based on Proposition~\ref{prop:f_n and c_D}.
Let $H$ be a complex Hilbert space and $H_1,...,H_n$ be closed subspaces of $H$.
Denote by $P_i$ the orthogonal projection onto $H_i$, $i=1,...,n$.
Assume that $c_D(H_1,...,H_n)\leqslant c$.
We have to prove that
$$
\|P_n...P_2 P_1\|\leqslant
\sqrt{\dfrac{n-4(n-1)(\sin^2(\pi/(2n)))(1-c)}{n+4(n-1)^2(\sin^2(\pi/(2n)))(1-c)}}.
$$
By the definition of $c_D$ for arbitrary vectors $x_1\in H_1,...,x_n\in H_n$ we have
\begin{equation*}
2\sum_{i<j}Re\langle x_i,x_j\rangle\leqslant c_D(H_1,...,H_n)(n-1)\sum_{i=1}^n \|x_i\|^2\leqslant c(n-1)\sum_{i=1}^n\|x_i\|^2.
\end{equation*}
It follows that
\begin{align*}
&\sum_{i<j}\|x_i-x_j\|^2=\sum_{i<j}(\|x_i\|^2+\|x_j\|^2-2Re\langle x_i,x_j\rangle)=\\
&=(n-1)\sum_{i=1}^n\|x_i\|^2-2\sum_{i<j}Re\langle x_i,x_j\rangle\geqslant
(n-1)\sum_{i=1}^n\|x_i\|^2-c(n-1)\sum_{i=1}^n\|x_i\|^2=\\
&=(n-1)(1-c)\sum_{i=1}^n \|x_i\|^2=:\varepsilon\sum_{i=1}^n\|x_i\|^2.
\end{align*}
By Lemma~\ref{L:difference H} we get
\begin{equation}\label{eq:difference in H}
D_n\sum_{i=1}^{n-1}\|x_i-x_{i+1}\|^2\geqslant\sum_{i<j}\|x_i-x_j\|^2\geqslant\varepsilon\sum_{i=1}^n\|x_i\|^2.
\end{equation}
Now consider arbitrary $x\in H$ and set $x_i:=P_i...P_2 P_1 x$, $i=1,...,n$.
Then $x_{i+1}=P_{i+1}x_i$, $i=1,...,n-1$.
It follows that $\|x_{i+1}\|\leqslant\|x_i\|$ and 
\begin{equation*}
\|x_i-x_{i+1}\|^2=\|x_i-P_{i+1}x_i\|^2=\|x_i\|^2-\|P_{i+1}x_i\|^2=\|x_i\|^2-\|x_{i+1}\|^2.
\end{equation*}
Thus $\|x_1\|\geqslant \|x_2\|\geqslant...\geqslant \|x_n\|$ and $\sum_{i=1}^{n-1}\|x_i-x_{i+1}\|^2=\|x_1\|^2-\|x_n\|^2$.
Using~\eqref{eq:difference in H} we get
\begin{equation*}
D_n(\|x_1\|^2-\|x_n\|^2)\geqslant\varepsilon\sum_{i=1}^n\|x_i\|^2\geqslant\varepsilon(\|x_1\|^2+(n-1)\|x_n\|^2).
\end{equation*}
We rewrite this inequality as follows:
\begin{equation*}
(D_n-\varepsilon)\|x_1\|^2\geqslant (D_n+(n-1)\varepsilon)\|x_n\|^2,
\end{equation*}
i.e.,
\begin{equation*}
\|x_n\|^2\leqslant\dfrac{D_n-\varepsilon}{D_n+(n-1)\varepsilon}\|x_1\|^2,
\end{equation*}
that is,
\begin{equation*}
\|x_n\|\leqslant\sqrt{\dfrac{D_n-\varepsilon}{D_n+(n-1)\varepsilon}}\|x_1\|.
\end{equation*}
Since $x_n=P_n...P_2 P_1 x$ and $x_1=P_1 x$, we conclude that
\begin{equation*}
\|P_n...P_2 P_1 x\|\leqslant\sqrt{\dfrac{D_n-\varepsilon}{D_n+(n-1)\varepsilon}}\|P_1 x\|\leqslant 
\sqrt{\dfrac{D_n-\varepsilon}{D_n+(n-1)\varepsilon}}\|x\|.
\end{equation*}
It follows that $\|P_n...P_2 P_1\|\leqslant \sqrt{\dfrac{D_n-\varepsilon}{D_n+(n-1)\varepsilon}}$.
Finally, note that
\begin{equation*}
\dfrac{D_n-\varepsilon}{D_n+(n-1)\varepsilon}=
\dfrac{\dfrac{n}{4\sin^2(\pi/(2n))}-(n-1)(1-c)}{\dfrac{n}{4\sin^2(\pi/(2n))}+(n-1)^2(1-c)}=
\dfrac{n-4(n-1)(\sin^2(\pi/(2n)))(1-c)}{n+4(n-1)^2(\sin^2(\pi/(2n)))(1-c)}
\end{equation*}
and the proof of Theorem~\ref{Th:inequality for fn} is complete.

\subsection{Proof of Theorem~\ref{Th:lower bound}}

The proof of Theorem~\ref{Th:lower bound} is based on Proposition~\ref{prop:f_n and c_D}.
Consider the two-dimensional Hilbert space $H=\mathbb{C}^2$.
For a number $\alpha\in\mathbb{R}$ let
$L(\alpha)=\{(\cos\alpha,\sin\alpha)^t z\,|z\in\mathbb{C}\}$
be the one-dimensional subspace spanned by the vector $(\cos\alpha,\sin\alpha)^t$.
Let $\alpha_1,...,\alpha_n$ be real numbers such that for some
$i$ and $j$ $\alpha_i\neq\alpha_j$.
For each $\tau\geqslant 0$
consider the system of one-dimensional subspaces
$H_k:=L(\alpha_k\tau)$, $k=1,...,n$.
Let us find
$$
c(\tau):=c_D(L_1(\alpha_1\tau),...,L_n(\alpha_n\tau)).
$$
By Proposition~\ref{P:c_D and P_1+...+P_n} we have
$\|P_1+...+P_n\|=1+(n-1)c(\tau)$, where $P_k$ is the orthogonal projection
onto $L(\alpha_k\tau)$, $k=1,2,...,n$.
We have
$$
P_k=\begin{pmatrix}
\cos^2(\alpha_k\tau)&\cos(\alpha_k\tau)\sin(\alpha_k\tau)\\
\cos(\alpha_k\tau)\sin(\alpha_k\tau)&\sin^2(\alpha_k\tau)
\end{pmatrix}
$$
for $k=1,2,...,n$.
Therefore
$$
P_1+...+P_n=\begin{pmatrix}
\sum_{k=1}^n \cos^2(\alpha_k\tau)&\sum_{k=1}^n \cos(\alpha_k\tau)\sin(\alpha_k\tau)\\
\sum_{k=1}^n \cos(\alpha_k\tau)\sin(\alpha_k\tau)&\sum_{k=1}^n \sin^2(\alpha_k\tau)
\end{pmatrix}
=:M(\tau).
$$
Let us find $\|P_1+...+P_n\|=\|M(\tau)\|$.
Since the matrix $M(\tau)$ is Hermitian and positive semidefinite,
we conclude that $\|M(\tau)\|$ is equal to the largest eigenvalue of $M(\tau)$.
The characteristic polynomial of $M(\tau)$ is equal to
$\lambda^2-tr(M(\tau))\lambda+\det(M(\tau))$.
It is clear that trace of $M(\tau)$ is equal to $n$.
Consider
\begin{align*}
d(\tau)&:=\det(M(\tau))=
\left(\sum_{k=1}^n \cos^2(\alpha_k\tau)\right)
\left(\sum_{k=1}^n \sin^2(\alpha_k\tau)\right)-
\left(\sum_{k=1}^n \cos(\alpha_k\tau)\sin(\alpha_k\tau)\right)^2=\\
&=\sum_{i,j}\cos^2(\alpha_i\tau)\sin^2(\alpha_j\tau)-
\sum_{i=1}^n\cos^2(\alpha_i\tau)\sin^2(\alpha_i\tau)-\\
&-2\sum_{i<j}\cos(\alpha_i\tau)\sin(\alpha_i\tau)\cos(\alpha_j\tau)\sin(\alpha_j\tau)=\\
&=\sum_{i<j}(\cos^2(\alpha_i\tau)\sin^2(\alpha_j\tau)+
\cos^2(\alpha_j\tau)\sin^2(\alpha_i\tau)-\\
&-2\cos(\alpha_i\tau)\sin(\alpha_i\tau)\cos(\alpha_j\tau)\sin(\alpha_j\tau))=\\
&=\sum_{i<j}(\cos(\alpha_i\tau)\sin(\alpha_j\tau)-
\cos(\alpha_j\tau)\sin(\alpha_i\tau))^2=\\
&=\sum_{i<j}\sin^2((\alpha_i-\alpha_j)\tau).
\end{align*}
Now we have the following equation for the eigenvalues of $M(\tau)$:
$$
\lambda^2-n\lambda+d(\tau)=0.
$$
The largest root is equal to $(n+\sqrt{n^2-4d(\tau)})/2$.
Therefore
$$
\|P_1+...+P_n\|=\dfrac{n+\sqrt{n^2-4d(\tau)}}{2}=1+(n-1)c(\tau).
$$
Now we note a few properties of the functions $c(\tau)$ and $d(\tau)$:

(1) $d(0)=0$ and $c(0)=1$;

(2) the functions $d$ and $c$ are continuous on $[0,+\infty)$;

(3) there exists $\tau_0=\tau_0(\alpha_1,...,\alpha_n)>0$ such that
$d$ is increasing on $[0,\tau_0]$.
Consequently, $c$ is decreasing on $[0,\tau_0]$.

(4) Since $\sin^2(\alpha\tau)=\alpha^2 \tau^2+O(\tau^4)$ as $\tau\to 0+$,
we conclude that
$$
d(\tau)=s_1\tau^2+O(\tau^4),\,\tau\to 0+,
$$
where $s_1=s_1(\alpha_1,...,\alpha_n)=\sum_{i<j}(\alpha_i-\alpha_j)^2$.

(5) Since $\sqrt{1+u}=1+u/2+O(u^2)$ as $u\to 0$, we conclude that
\begin{align*}
\dfrac{n+\sqrt{n^2-4d(\tau)}}{2}&=
\dfrac{n}{2}\left(1+\sqrt{1-\dfrac{4d(\tau)}{n^2}}\right)=\\
&=\dfrac{n}{2}\left(1+1-\dfrac{2d(\tau)}{n^2}+O((d(\tau))^2)\right)=\\
&=n-\dfrac{1}{n}d(\tau)+O(\tau^4)=\\
&=n-\dfrac{1}{n}(s_1\tau^2+O(\tau^4))+O(\tau^4)=\\
&=n-\dfrac{s_1}{n}\tau^2+O(\tau^4)
\end{align*}
as $\tau\to 0+$.
Thus for $c(\tau)$ we have
$$
1+(n-1)c(\tau)=n-\dfrac{s_1}{n}\tau^2+O(\tau^4),\,\tau\to 0+,
$$
i.e.,
\begin{equation}\label{eq:c(tau)}
c(\tau)=1-\dfrac{s_1}{n(n-1)}\tau^2+O(\tau^4),\,\tau\to 0+.
\end{equation}

Now consider $\|P_n...P_2 P_1\|$.
We have
\begin{align*}
&\|P_n...P_2 P_1\|=
\|P_n...P_2 (\cos(\alpha_1\tau),\sin(\alpha_1\tau))^t\|=\\
&=|\cos((\alpha_2-\alpha_1)\tau)\cos((\alpha_3-\alpha_2)\tau)
...\cos((\alpha_{n}-\alpha_{n-1})\tau)|.
\end{align*}
Thus for small enough $\tau$ we have
\begin{align*}
&\|P_n...P_2 P_1\|=\\
&=\cos((\alpha_2-\alpha_1)\tau)\cos((\alpha_3-\alpha_2)\tau)
...\cos((\alpha_{n}-\alpha_{n-1})\tau)=\\
&=\left(1-\dfrac{(\alpha_2-\alpha_1)^2}{2}\tau^2+O(\tau^4)\right)
...
\left(1-\dfrac{(\alpha_n-\alpha_{n-1})^2}{2}\tau^2+O(\tau^4)\right)=\\
&=1-\dfrac{s_2}{2}\tau^2+O(\tau^4),\,\tau\to 0+,
\end{align*}
where $s_2=\sum_{i=1}^{n-1}(\alpha_i-\alpha_{i+1})^2$.
So
\begin{equation}\label{eq:norm of product of P}
\|P_n...P_2 P_1\|=1-\dfrac{s_2}{2}\tau^2+O(\tau^4),\,\tau\to 0+.
\end{equation}

From~\eqref{eq:c(tau)} it follows that
\begin{equation}\label{eq:1-c(tau)}
1-c(\tau)=\dfrac{s_1}{n(n-1)}\tau^2+O(\tau^4)
\sim\dfrac{s_1}{n(n-1)}\tau^2,\,\tau\to 0+
\end{equation}
and
\begin{equation}\label{eq:tau^2}
\tau^2=\dfrac{n(n-1)}{s_1}(1-c(\tau))+O(\tau^4),\,\tau\to 0+.
\end{equation}
Now using Proposition~\ref{prop:f_n and c_D},
\eqref{eq:norm of product of P}, \eqref{eq:tau^2} and \eqref{eq:1-c(tau)}
we get
\begin{align*}
f_n(c(\tau))&\geqslant\|P_n...P_2 P_1\|=1-\dfrac{s_2}{2}\tau^2+O(\tau^4)=\\
&=1-\dfrac{s_2}{2}\left(\dfrac{n(n-1)}{s_1}(1-c(\tau))+O(\tau^4)\right)+O(\tau^4)=\\
&=1-\dfrac{n(n-1)}{2}\dfrac{s_2}{s_1}(1-c(\tau))+O(\tau^4)=\\
&=1-\dfrac{n(n-1)}{2}\dfrac{s_2}{s_1}(1-c(\tau))+O((1-c(\tau))^2)\geqslant\\
&\geqslant 1-\dfrac{n(n-1)}{2}\dfrac{s_2}{s_1}(1-c(\tau))-K(1-c(\tau))^2
\end{align*}
for $\tau\in(0,\tau_1]$, where 
$\tau_1=\tau_1(\alpha_1,...,\alpha_n)>0$ and $K=K(\alpha_1,...,\alpha_n)$.
Thus
\begin{equation}\label{eq:inequality for f_n(c)}
f_n(c)\geqslant 1-\dfrac{n(n-1)}{2}\dfrac{s_2}{s_1}(1-c)-K(1-c)^2
\end{equation}
for all $c\in[c(\tau_1),1]$.

Now we want to choose $\alpha_1,...,\alpha_n$ for which the value of $s_2/s_1$
is as small as possible.
Consider $s_2/s_1$.
Since the value of $s_2/s_1$ does not change under substitution
$\alpha_i\to\alpha_i+a$, $i=1,2,...,n$, $a\in\mathbb{R}$,
we can and will assume that $\alpha_1+...+\alpha_n=0$.
This equality means that the vector 
$\overline{\alpha}=(\alpha_1,...,\alpha_n)^t$ is orthogonal to
the vector $e=(1,...,1)^t$.
For such $\overline{\alpha}$ we have
\begin{align*}
s_1&=\sum_{i<j}(\alpha_i-\alpha_j)^2=
(n-1)\sum_{i=1}^n\alpha_i^2-2\sum_{i<j}\alpha_i\alpha_j=\\
&=n\sum_{i=1}^n\alpha_i^2-(\alpha_1+...+\alpha_n)^2=n\sum_{i=1}^n\alpha_i^2.
\end{align*}
Also $s_2=\langle L\overline{\alpha},\overline{\alpha} \rangle$, where
$$
L=\begin{pmatrix}
1 & -1 & 0 & \cdots & 0 & 0\\
-1 & 2 & -1 & 0 &\ddots & 0\\
0 &-1 & 2 & -1& \ddots&\vdots\\
\vdots& 0 & -1& \ddots& \ddots & 0\\
0&\ddots& \ddots & \ddots &2&-1\\
0&0&\cdots &0&-1 & 1 
\end{pmatrix}
$$
and $\langle\cdot,\cdot\rangle$ is the standard inner product in $\mathbb{R}^n$.
Note that the matrix $L$ is the Laplacian matrix of the graph 
$\mathcal{P}_n$ with vertices $1,2,...,n$ and
edges $\{1,2\}, \{2,3\},..., \{n-1,n\}$ (the path of length $n-1$).
Let $\lambda_1\leqslant\lambda_2\leqslant...\leqslant\lambda_n$ 
be the spectrum of $L$.
It is clear that $L$ is positive semidefinite and $\ker(L)$
is the one-dimensional subspace spanned by the vector $e$.
Thus $\lambda_1=0$ and $\lambda_2>0$.
Note that $\lambda_2$ is called the algebraic connectivity of 
the graph $\mathcal{P}_n$ and is denoted by $a(\mathcal{P}_n)$.
It is well-known that 
$\lambda_2=a(\mathcal{P}_n)=4\sin^2(\pi/(2n))$.

Now we return to the problem of minimizing the value of $s_2/s_1$.
We have
$$
\dfrac{s_2}{s_1}=\dfrac{1}{n}
\dfrac{\langle L\overline{\alpha},\overline{\alpha} \rangle}{\|\overline{\alpha}\|^2}.
$$
The minimum value of 
$\langle L\overline{\alpha},\overline{\alpha} \rangle/\|\overline{\alpha}\|^2$
under conditions $\langle\overline{\alpha},e\rangle=0$, $\overline{\alpha}\neq 0$
is equal to $\lambda_2$ 
(and it is attained when $\overline{\alpha}$ is
an eigenvector of $L$ corresponding to the eigenvalue $\lambda_2$).
So, let $\overline{\alpha}$ be an eigenvector of $L$ corresponding to the eigenvalue
$\lambda_2$, then from \eqref{eq:inequality for f_n(c)} it follows that
\begin{align*}
f_n(c)&\geqslant 1-\dfrac{n(n-1)}{2}\dfrac{4\sin^2(\pi/(2n))}{n}(1-c)-K(1-c)^2=\\
&=1-2(n-1)(\sin^2(\pi/(2n)))(1-c)-K(1-c)^2
\end{align*}
for all $c\in[c_n,1]$, where $c_n<1$ and $K=K_n$.
By enlarging $K$, if necessary, we get the inequality
$$
f_n(c)\geqslant 1-2(n-1)(\sin^2(\pi/(2n)))(1-c)-K(1-c)^2
$$
for all $c\in[0,1]$.

\end{document}